\author{David Bate and Tuomas Orponen}
\title{On the conformal dimension of product measures}
\keywords{Conformal dimension, quasisymmetric mappings, doubling measures}
\address{University of Helsinki, Department of Mathematics and Statistics}
\subjclass[2010]{30C65 (Primary) 28A78 (Secondary)}
\thanks{Both D.B. and T.O. are supported by the Academy of Finland via the Centre of Excellence in Analysis and Dynamics Research (project No. 307333). D.B. is supported by the University of Helsinki via the project Quantitative rectifiability of sets and measures in Euclidean Spaces and Heisenberg groups (project No. 7516125). T.O. is supported by the Academy of Finland via the project Restricted families of projections and connections to Kakeya type problems, grant No. 274512.}
\email{david.bate@helsinki.fi \\ tuomas.orponen@helsinki.fi}
\newcommand{\R}{\mathbb{R}}
\newcommand{\N}{\mathbb{N}}
\newcommand{\Z}{\mathbb{Z}}
\newcommand{\tn}{\mathbb{P}}
\newcommand{\calJ}{\mathcal{J}}
\newcommand{\calL}{\mathcal{L}}
\newcommand{\calD}{\mathcal{D}}
\newcommand{\calH}{\mathcal{H}}
\newcommand{\calB}{\mathcal{B}}
\newcommand{\calG}{\mathcal{G}}
\newcommand{\calI}{\mathcal{I}}
\newcommand{\calS}{\mathcal{S}}
\newcommand{\spt}{\operatorname{spt}}
\newcommand{\Hd}{\dim_{\mathrm{H}}}
\newcommand{\E}{\mathbb{E}}
\newcommand{\diam}{\operatorname{diam}}
\newcommand{\card}{\operatorname{card}}
\newcommand{\dist}{\operatorname{dist}}
\newcommand{\gen}{\mathrm{gen}}
\newcommand{\Cd}{\dim_{\mathrm{C}}}
\newcommand{\g}{g}
\numberwithin{equation}{section}
\theoremstyle{plain}
\newtheorem{thm}[equation]{Theorem}
\newtheorem{lemma}[equation]{Lemma}
\newtheorem{ex}[equation]{Example}
\newtheorem{proposition}[equation]{Proposition}
\newtheorem{question}{Question}
\theoremstyle{definition}
\newtheorem{definition}[equation]{Definition}
\newtheorem{notation}[equation]{Notation}
\theoremstyle{remark}
\newtheorem{remark}[equation]{Remark}
\newcommand{\nref}[1]{(\hyperref[#1]{#1})}
\begin{document}

\begin{abstract} Given a compact set $E \subset \R^{d - 1}$, $d \geq 1$, write $K_{E} := [0,1] \times E \subset \R^{d}$. A theorem of C. Bishop and J. Tyson states that any set of the form $K_{E}$ is \emph{minimal for conformal dimension}: if $(X,d)$ is a metric space and $f \colon K_{E} \to (X,d)$ is a quasisymmetric homeomorphism, then 
\begin{displaymath} \Hd f(K_{E}) \geq \Hd K_{E}. \end{displaymath}
We prove that the measure-theoretic analogue of the result is not true. For any $d \geq 2$ and $0 \leq s < d - 1$, there exist compact sets $E \subset \R^{d - 1}$ with $0 < \calH^{s}(E) < \infty$ such that the conformal dimension of $\nu$, the restriction of the $(1 + s)$-dimensional Hausdorff measure on $K_{E}$, is zero. More precisely, for any $\epsilon > 0$, there exists a quasisymmetric embedding $F \colon K_{E} \to \R^{d}$ such that $\Hd F_{\sharp}\nu < \epsilon$.
\end{abstract}

\maketitle


\section{Introduction}

We start by recalling the notions of \emph{quasisymmetric maps} and \emph{conformal dimension}. 
\begin{definition}[Quasisymmetric maps] A map $f \colon (X,d) \to (Y,d')$ between two metric spaces $(X,d),(Y,d')$ is called \emph{quasisymmetric}, if there is a homeomorphism $\eta \colon [0,\infty) \to [0,\infty)$ such that the inequality
\begin{displaymath} \frac{d'(f(x),f(a))}{d'(f(x),f(b))} \leq \eta \left(\frac{d(x,a)}{d(x,b)} \right) \end{displaymath}
holds for all triples $x,a,b \in X$ with $x \neq b$.
\end{definition}
\begin{definition}[Conformal dimension]\label{Cdim} The \emph{conformal dimension} of a metric space $(X,d)$ is 
\begin{displaymath} \Cd X = \inf_{f} \Hd f(X), \end{displaymath}
where $\Hd$ stands for Hausdorff dimension, and the $\inf$ is taken over all quasisymmetric homeomorphisms $f$ between $(X,d)$ and any metric space $(Y,d')$. The space $(X,d)$ is called \emph{minimal for conformal dimension}, if $\Cd X = \Hd X$. 
\end{definition}
The notion of conformal dimension was first introduced by Pansu \cite{Pa} in 1989. For an extensive introduction to the subject, and plenty of additional references, see the monograph \cite{MT} of Mackay and Tyson. A lower bound for $\Cd X$ is the topological dimension of $X$, namely the $\inf$ of the dimensions of metric spaces homeomorphic to $X$. Thus, for example, $\Cd [0,1] = 1$. A well-known heuristic suggests that an improvement over the trivial bound can be expected, if $X$ contains a sufficiently rich family of connected subsets. As far as we know, the principle first appeared in Pansu's work, \cite[Proposition 2.9]{Pa}, and is, today, supported by a large body of specific results, see \cite[4.6 Notes]{MT}. For the motivation of this paper, the following result is most relevant. It appeared implicitly in the 2001 paper \cite{BT} of Bishop and Tyson, and is stated explicitly as \cite[Proposition 4.1.11]{MT}:
\begin{thm}\label{BTTheorem} If $E \subset \R^{d - 1}$ is compact, $d \geq 1$, then $[0,1] \times E \subset \R^{d}$ is minimal for conformal dimension. 
\end{thm}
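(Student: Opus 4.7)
My strategy is to build a Frostman measure of dimension approaching $1 + \Hd E$ on $f(K_{E})$, exploiting the fibration of $K_{E}$ by horizontal arcs $\gamma_{e} := [0,1] \times \{e\}$, $e \in E$, each of which is mapped by $f$ onto a connected subset of the target.

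Fix $s < \Hd E$ and, by Frostman's lemma, let $\mu$ be a Radon measure supported on $E$ satisfying $\mu(B(e, r)) \leq r^{s}$. Form the product measure $\pi := \mathcal{L}^{1}|_{[0,1]} \otimes \mu$ on $K_{E}$ and let $\tilde{\pi} := f_{\sharp}\pi$. It will suffice to verify the Frostman bound $\tilde{\pi}(B(y, r)) \lesssim r^{1+s}$ for all small balls $B(y, r) \subset f(K_{E})$: the mass distribution principle then gives $\Hd f(K_{E}) \geq 1+s$, and letting $s \nearrow \Hd E$ concludes. By Fubini,
\begin{equation*}
\tilde{\pi}(B(y, r)) = \int_{E_{r}} \mathcal{L}^{1}(A_{e}) \, d\mu(e), \qquad A_{e} := \{t \in [0,1] : f(t, e) \in B(y, r)\},
\end{equation*}
with $E_{r} := \{e \in E : A_{e} \neq \emptyset\}$, and the problem splits into (i) $\mu(E_{r}) \lesssim r^{s}$ and (ii) $\mathcal{L}^{1}(A_{e}) \lesssim r$ uniformly in $e \in E_{r}$.

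Both subestimates should be extracted from the quasisymmetry of $f$ and of $f^{-1}$. Normalise by fixing two distinguished points in $K_{E}$ (say the endpoints $(0, e_{0})$ and $(1, e_{0})$ of a reference fibre) so that their $f$-images are at distance $1$; then the inverse quasisymmetric gauge bounds the diameter of $f^{-1}(B(y, r))$ in the $\R^{d-1}$-direction, yielding (i) through the Frostman property of $\mu$, and bounds the diameter in the $[0,1]$-direction, yielding (ii). The connectedness of each $f(\gamma_{e})$, preserved by $f$, plays a crucial role in comparing parameter-length of $A_{e}$ with image-diameter inside $B(y,r)$.

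The principal obstacle is that a naive application of quasisymmetric distortion delivers only H\"{o}lder-type control, $\eta^{-1}(r) \lesssim r^{\alpha}$ with $\alpha \in (0, 1]$; inserting this into (i)--(ii) would produce the weaker exponent $\alpha(1 + s)$ rather than $1 + s$. Recovering the sharp exponent requires more refined input: one must exploit the uniform perfectness of $[0,1]$ and of the curve family $\{f(\gamma_{e})\}_{e \in E}$ to set up a conformal-modulus (or combinatorial-modulus) estimate on the fibre family, transfer it through $f$ using quasi-invariance of modulus, and thereby obtain the sharp $(1+s)$-Frostman bound. This modulus step is where I expect the essential difficulty of the proof to reside, and where the special product structure $K_{E} = [0,1] \times E$ is genuinely used.
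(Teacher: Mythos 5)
The paper does not prove Theorem \ref{BTTheorem}; it is cited from Bishop--Tyson \cite{BT} and stated explicitly as Proposition 4.1.11 of Mackay--Tyson \cite{MT}. So your attempt must be measured against the standard argument for that proposition rather than against anything in this paper.

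There is a fundamental gap, and it is --- rather pointedly --- the very content of the paper you are reading. Your strategy is to produce a $(1+s)$-Frostman measure on $f(K_E)$ by pushing forward $\pi = \mathcal{L}^1|_{[0,1]} \otimes \mu$. But $f_{\sharp}\pi$ need not satisfy any Frostman bound at all: a quasisymmetric $f$ can compress the \emph{measure} $\pi$ onto a set of arbitrarily small Hausdorff dimension even though it cannot compress the \emph{set} $K_E$. For $d=1$ this is Tukia's Example \ref{tukiaEx}: $f_{\sharp}\mathcal{L}$ can live on a set of dimension $<\epsilon$. For $d \geq 2$ this is exactly Theorem \ref{mainHigherDimension}. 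Concretely, your sub-estimate (ii), $\mathcal{L}^1(A_e) \lesssim r$, is false: quasisymmetry of $f^{-1}$ only yields $\operatorname{diam} A_e \lesssim \tilde\eta(r/\delta)$ for the inverse gauge $\tilde\eta$, and $\tilde\eta(t)/t$ can blow up as $t \to 0$. This is not a technical loss that a modulus argument will repair; the Frostman conclusion $f_{\sharp}\pi(B(y,r)) \lesssim r^{1+s}$ you are aiming for is simply untrue in general, and no refinement of the two-sided quasisymmetric estimates will rescue it, because dimension distortion of the push-forward measure genuinely happens.

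What the actual Bishop--Tyson/Pansu argument does differently is to never commit to a particular measure on $f(K_E)$. One fixes the $s$-Frostman measure $\mu$ on $E$ but uses it as a \emph{transversal measure} on the curve family $\Gamma = \{f(\gamma_e)\}_{e \in E}$, and then argues against an arbitrary cover $\{B_i\}$ of the set $f(K_E)$. Quasisymmetry and compactness give a uniform lower bound $\operatorname{diam} f(\gamma_e) \geq \delta_0 > 0$; since each $f(\gamma_e)$ is connected, any cover must satisfy $\sum_{i: B_i \cap f(\gamma_e) \neq \emptyset} \operatorname{diam}(B_i) \geq \delta_0$, and integrating over $e$ against $\mu$ yields
\begin{displaymath}
\sum_i \operatorname{diam}(B_i) \cdot \mu\big(\{e : f(\gamma_e) \cap B_i \neq \emptyset\}\big) \geq \delta_0 \, \mu(E).
\end{displaymath}
The remaining and genuinely delicate step is to control $\mu(\{e : f(\gamma_e) \cap B_i \neq \emptyset\})$ by $\operatorname{diam}(B_i)^s$ up to constants, and it is here that the modulus-type input you gesture at actually enters. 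You have correctly identified that a crude application of $\eta$ loses a power; but the remedy has to be applied in this cover-based, set-theoretic framework, not to the push-forward measure $f_{\sharp}\pi$, whose Frostman property cannot be salvaged because it fails. The distinction between set and measure is precisely what separates Theorem \ref{BTTheorem} (true) from the naive measure analogue (false, and disproved by this paper).
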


For the most general and recent results in this vein, see Section 4 in the paper \cite{BHW} of Bishop, Hakobyan and Williams. 

A natural counterpart for the conformal dimension of a metric space is the \emph{conformal dimension of a measure}:
\begin{definition}[Conformal dimension of measures]\label{CdimMu} Let $(X,\mu,d)$ be a metric measure space, where $\mu$ is a locally finite Borel measure. The \emph{conformal dimension of $\mu$} is the number
\begin{displaymath} \Cd \mu := \inf_{f} \Hd f_{\sharp}\mu, \end{displaymath}
where the $\inf$ ranges over all quasisymmetric homeomorphisms between $(\spt \mu,d)$ and any metric space $(Y,d')$. Here $\Hd \mu := \inf \{\Hd A : \mu(A^{c}) = 0\}$, and $f_{\sharp}\mu$ is the push-forward of $\mu$ under $f$, defined by $(f_{\sharp}\mu)(A) = \mu(f^{-1}(A))$ for Borel sets $A$. A measure $\mu$ is \emph{minimal for conformal dimension}, if $\Cd \mu = \Hd \mu$.
\end{definition}

As discussed above, the conformal dimension of metric space $(X,d)$ is intricately related to the topology and connectivity of $X$. In this light, even considering the conformal dimension of a measure $\mu$ on $X$ may seem unnatural: in order to prove that $\Cd \mu$ is small, it suffices to find a set $A$ of full $\mu$-measure, and a quasisymmetric homeomorphism $f \colon \spt \mu \to (Y,d')$ such that $\Hd f(A)$ is small. From a topological viewpoint, the set $A$ may easily be far smaller than $\spt \mu$. So, it is not at all clear to begin with, if the structure of $\spt \mu$ -- topological or metric -- plays any role in the problem. In fact, an example of Tukia \cite{Tu} from 1989 seems to suggest that it does not:
\begin{ex}[Tukia]\label{tukiaEx} For any $\epsilon > 0$, there is a Borel subset $B \subset [0,1]$ of Lebesgue measure unity, and a quasisymmetric homeomorphism $f \colon [0,1] \to [0,1]$ such that $\Hd f(B) < \epsilon$. In particular, the conformal dimension of Lebesgue measure on $[0,1]$ equals zero. 
\end{ex}

We remark that the sets $B \subset [0,1]$ in Example \ref{tukiaEx} are far from arbitrary. In fact, under suitable uniform "fatness" assumptions on a (compact, totally disconnected) set $B \subset [0,1]$ of positive measure, no quasisymmetric homeomorphism can map $B$ to a null set, let alone lower its dimension. For more information, see Staples and Ward \cite{SW}, and Buckley, Hanson, and MacManus \cite{BHM}.

Analogous problems become much harder for measures in $[0,1]^{d}$, for $d \geq 2$. Question 16 on the list of Heinonen and Semmes \cite{HS} asks the following: if $X$ is a metric space and $f \colon X \to \R^{d}$ is a quasisymmetric homeomorphism, $d \geq 2$, then does $f$ map sets of null $\calH^{d}$ measure to sets of null $\calH^{d}$ measure? Question 15 on the list asks the same, assuming \emph{a priori} $X$ to have locally finite $\calH^{d}$ measure. The questions are open even if $d = 2$ and $X \subset \R^{3}$.

These problems can be (very nearly) re-phrased in terms of the conformal dimension of the Lebesgue measure $\mathcal{L}^{d}$ on $\R^{d}$, $d \geq 2$. Namely, if $\Cd \mathcal{L}^{d} < d$, then, by definition, there exists a metric space $X$ and a quasisymmetric homeomorphism $f \colon \R^{d} \to X$ such that $\Hd f(A) < n$ for some set $A \subset \R^{d}$ of positive (or even full) measure. This would imply that the quasisymmetric homeomorphism $f^{-1} \colon X \to \R^{d}$ sends the $\calH^{d}$ null set $f(A) \subset X$ to a set of positive $d$-dimensional measure, answering Question 16 in the negative.

The purpose of the current paper is to investigate the situation in-between Tukia's example, and the Heinonen-Semmes problems. What if $f$ is a quasisymmetric homeomorphism defined on a set of the form $[0,1] \times E$, where $E$ has many more points than one (Tukia's example), but not quite Lebesgue positively many of them (Heinonen-Semmes problems)? Recalling the result of Bishop and Tyson, Theorem \ref{BTTheorem}, this seems like a very natural intermediate question.

Here is the main result of the paper:
\begin{thm}\label{mainHigherDimension} For any $d \geq 2$ and $0 \leq t < d - 1$, there exist compact sets $E \subset \R^{d - 1}$ with $0 < \calH^{t}(E) < \infty$ such that the conformal dimension of the $(1 + t)$-dimensional Hausdorff measure $\nu$ on $[0,1] \times E$ is zero: for any $\epsilon > 0$, there exists a quasisymmetric embedding $F \colon [0,1] \times E \to \R^{d}$ such that $\Hd F_{\sharp}\nu < \epsilon$.
\end{thm}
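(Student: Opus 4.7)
The plan is to build on Tukia's Example~\ref{tukiaEx} and combine it with a multi-scale compression adapted to a carefully chosen set $E$. The naive attempt of setting $F(x,y) = (g_{\delta}(x), y)$ for Tukia's map $g_{\delta}$ pushes $\nu$ into $g_{\delta}(B_{\delta}) \times E$, whose Hausdorff dimension is at best $\delta + t$. To get below $t$ one must simultaneously compress, by the same ambient quasisymmetric map, a set of full $\calH^{t}|_{E}$-measure onto a set of small Hausdorff dimension in $\R^{d-1}$.

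The hypothesis $t < d-1$ is exactly what makes such a compression possible. I would construct $E = \bigcap_{k} E_{k} \subset \R^{d-1}$ as a Cantor-type set with $E_{k}$ a disjoint union of roughly $r_{k}^{-t}$ closed balls of a common radius $r_{k} \to 0$, arranged so that the children of each generation-$(k-1)$ ball are uniformly separated at relative distance $\gtrsim 1$. Since the $\sim (r_{k-1}/r_{k})^{t}$ children of radius $r_{k}$ inside a parent ball of radius $r_{k-1}$ occupy only a fraction $(r_{k}/r_{k-1})^{d-1-t}$ of the parent's volume and $d-1-t > 0$, there is ample room to realize such placements. Standard arguments then give $0 < \calH^{t}(E) < \infty$ with $E$ uniformly porous in $\R^{d-1}$.

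The map $F$ is built directly as an infinite composition $F = \lim_{k}(F_{k} \circ F_{k-1} \circ \cdots \circ F_{1})$ of quasisymmetric self-maps of $\R^{d}$. Each $F_{k}$ is supported in a small neighbourhood of the generation-$k$ cells $[0,1] \times B_{k,j}$: within each such cell, $F_{k}$ applies a rescaled Tukia-type compression in the $[0,1]$ direction and a radial ``squeeze'' in the $\R^{d-1}$ direction that concentrates the descendants of $B_{k,j}$ in $E$ into a region of small relative size. Choosing the compression parameters $\delta_{k}$ to decrease geometrically, and the scale ratios $r_{k}/r_{k-1}$ so small that the modifications performed at generation $k$ are well separated in $\R^{d}$ from those at all other generations, one obtains a quasisymmetric limit $F$ on $[0,1] \times E$. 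A set of full $\nu$-measure is then carried into a countable union of ``residual'' sets, each of Hausdorff dimension at most roughly $\sum_{k} \delta_{k}$, which can be made smaller than $\epsilon$.

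The principal obstacle is ensuring that the infinite composition $F$ remains quasisymmetric with a single gauge $\eta$: each individual map $F_{k}$ has a quasisymmetry constant that degenerates as $\delta_{k} \to 0$, so the composition must rely on geometric separation of their supports rather than on a uniform bound on each factor. This separation is provided precisely by the uniform porosity of $E$, which is available exactly because $t < d-1$. Balancing the compression parameters $\delta_{k}$ against the scale ratios $r_{k}/r_{k-1}$ to achieve both quasisymmetry of the limit $F$ and the dimension bound for $F_{\sharp}\nu$ is the combinatorial and geometric heart of the proof.
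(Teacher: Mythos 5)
Your proposal takes a genuinely different route from the paper and contains a fundamental obstruction that would derail it. You plan to apply, at each generation $k$, a ``radial squeeze'' in the $\R^{d-1}$-direction that concentrates the descendants of each ball $B_{k,j}$ into a small region; this squeeze is uniform in the $[0,1]$-coordinate. But then the \emph{whole} set $[0,1]\times E$ gets carried, scale by scale, into nested small regions, so $\Hd F([0,1]\times E)$ drops below $1+t$ --- in direct contradiction with the Bishop--Tyson Theorem~\ref{BTTheorem}, which says $[0,1]\times E$ is minimal for conformal dimension. The escape from this trap is exactly what the paper engineers: the transverse compression must be \emph{correlated} with the $x$-compression, expanding where $f$ expands and shrinking where $f$ shrinks, so that the image of the whole set keeps its dimension while a Lebesgue-full set $G\times E$ gets compressed. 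Concretely, the conjugate map $g$ in Definition~\ref{g-definition} satisfies $|g(x,y)-g(x',y)|\lesssim|f(x)-f(x')|$ (property~\eqref{form58}, Lemma~\ref{linfinity-bound}), and more generally $|g(x)-g(b)|\lesssim\mu\bigl([x_1,x_1+|x-b|)\bigr)$ (Lemma~\ref{fcomparable}), so the transverse distortion at a scale $r_n$ is governed by the local density $\Theta(I)$ of $\mu$. This correlation is the entire mechanism, and your plan lacks it.

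Two further gaps are worth flagging. First, the $E$ you describe, with $\sim r_k^{-t}$ balls of radius $r_k$ arranged uniformly, could easily be $t$-Ahlfors--David regular, and the paper points out (Remark~\ref{aInftyRemark}, citing Buckley) that AD-regularity is a genuine obstruction: the Carleson condition~\nref{G2} would then have to hold at all scales, forcing $\mu\in A_\infty$ and hence no dimension compression. The paper's $E$ is built with super-exponentially decaying scales $r_n=3^{-2^{n+m_s}}$ precisely so that $\calS$ is sparse and $\mu$ can be genuinely singular. Second, you assert that geometric separation of the supports of the $F_k$ will make the infinite composition quasisymmetric despite the degenerating gauges of the individual factors; this is not automatic, and the compounding of local distortions across scales is exactly where such constructions usually fail. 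The paper sidesteps this entirely by writing $F(x_1,\ldots,x_d)=(f(x_1),g(x_1,x_2),\ldots,g(x_1,x_d))$ in closed form and proving quasisymmetry directly in Section~\ref{quasisymmetry}, rather than through a limit of compositions. Your dimension count ``at most $\sum_k\delta_k$'' is also unjustified: dimension distortion under composed quasisymmetric maps does not add in this way.
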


In fact, our proof gives something slightly stronger. For brevity, we denote the restriction of one-dimensional Lebesgue measure to $[0,1]$ by $\calL$. 
\begin{thm}\label{mainTechnical} For $d \geq 2$, there exists a dense set of values $s \in (0,d - 1)$, and compact sets $E \subset \R^{d - 1}$ with $0 < \calH^{s}(E) < \infty$, and with the following property. If $\nu$ is any Radon measure supported on $E$, then the conformal dimension of $\calL \times \nu$ is zero. In fact, for any $\epsilon > 0$, there exists a quasisymmetric embedding $F \colon [0,1] \times E \to \R^{d}$ such that $\Hd F_{\sharp}(\calL \times \nu) < \epsilon$ simultaneously for all Radon measures $\nu$ supported on $E$. 
\end{thm}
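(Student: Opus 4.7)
My plan is to construct $E$ as a homogeneous Ahlfors $s$-regular Cantor set in $[0,1]^{d-1}$ at a dense set of dimensions, and to build $F$ as a multi-scale piecewise linear quasisymmetric embedding modeled on Tukia's 1D construction, adapted to the product geometry of $[0,1] \times E$ and exploiting the Cantor scales of $E$.

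For the dense set of dimensions I would take $s = \log k/\log N$ with integers $N \geq 2$ and $1 \leq k < N^{d-1}$, and build $E = \bigcap_n E_n$, where $E_n \subset [0,1]^{d-1}$ is a union of $k^n$ well-separated axis-parallel cubes of side $N^{-n}$. This endows $[0,1] \times E$ with a natural grid at every scale $N^{-n}$: cells of the form $I \times Q$, with $I \subset [0,1]$ an interval of length $N^{-n}$ and $Q$ a level-$n$ cube of $E$. Two geometric features are essential: each cell is a $d$-dimensional "square" of side $N^{-n}$, and distinct level-$n$ cells are separated by gaps comparable to $N^{-n}$, by the Cantor gaps in $E$.

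Given $\epsilon > 0$, I would choose a Tukia-style multi-scale weight profile $\{\delta_n, r_n\}_{n \geq 1}$ with $\sum_n \delta_n < \infty$ (defining a "good" set $B \subset [0,1]$ of full Lebesgue measure as the points surviving every level) and with image compression ratios $r_n$ close to $1$ rapidly enough that the associated Tukia 1D PL map $h$ achieves $\Hd h(B) < \epsilon$. I then define $F$ cell by cell: inside each level-$n$ cell $I \times Q$, $F$ applies a horizontal PL compression on $I$ with parameters $(\delta_n, r_n)$, coupled to a matched rearrangement of the level-$(n+1)$ sub-cells of $Q$ in the remaining $d-1$ directions, chosen so that horizontal and vertical scales remain commensurate at every level. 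The good set $A = B \times E$ satisfies $\calL(A_y) = 1$ for every $y \in E$, so $(\calL \times \nu)(A^c) = 0$ simultaneously for every Radon measure $\nu$ on $E$, and it suffices to bound $\Hd F(A)$.

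The main obstacle will be the global quasisymmetry of $F$. A naive horizontal-only compression $(x, y) \mapsto (h(x), y)$ fails to be quasisymmetric in $\R^d$: at scales where $h$ compresses heavily, the unchanged vertical distances become catastrophically incompatible with the crushed horizontal ones. The argument must couple horizontal compressions to matched vertical rearrangements at every scale, using the Ahlfors $s$-regularity of $E$ and the matching of the grid with the Cantor scales to verify a single distortion bound $\eta$ across all levels $n$. This is the multi-scale analogue of the 1D Beurling--Ahlfors doubling-weight criterion underlying Tukia's example, and the precise balancing of the parameters $(\delta_n, r_n)$ against the cell geometry of $E$ is where I expect the technical weight of the proof to lie. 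Once the quasisymmetry of $F$ is established, the estimate $\Hd F(A) < \epsilon$ follows by a Moran-style covering of the self-similar image $F(A)$ at scales $N^{-n}$, together with a geometric-series computation over the product of the per-level compressions.
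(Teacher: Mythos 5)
Your proposal runs straight into the obstacle that the paper is built around avoiding: you propose to take $E$ to be an Ahlfors $s$-regular Cantor set, but this is exactly the situation in which the construction \emph{cannot} work. The authors emphasize this repeatedly: in the outline (Section 1.2), in Remark~\ref{aInftyRemark}, and via Question~\ref{KSQuestion}, which asks whether the result holds for the middle-thirds Cantor set (an Ahlfors regular example) and is explicitly posed as open. The reason is structural. To make $F$ quasisymmetric one needs a vertical companion map $\g(x,y)$ that satisfies the ``conjugacy'' estimate $|\g(x,y) - \g(x',y)| \lesssim |f(x) - f(x')|$ \emph{uniformly} over the scales present in $E$. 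In the paper this is achieved by a Carleson-series construction, and the Carleson condition \nref{G2} on the Riesz-product measure $\mu$ is imposed only on the sparse family $\calS$ of scales $\{r_n\}$ appearing in the construction of $E$. If $E$ is Ahlfors regular, $\calS$ must contain essentially all scales; and, by Buckley's theorem, a doubling measure $\mu$ satisfying the Carleson condition at all scales is necessarily $A_\infty$, hence $f$ does not lower the Hausdorff dimension of Lebesgue measure and the whole construction collapses. The paper instead chooses $r_n = 3^{-2^{n+m_s}}$ precisely so that between the gaps $s_{n+1}$ and the interval lengths $r_n$ there is a huge range of ``free'' scales where $\mu$ can be made highly singular while still satisfying the needed Carleson estimate on the sparse set $\calS$.

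There is also a second gap, related but distinct: your sketch of the vertical coupling is not a construction. You write that inside each level-$n$ cell one applies a horizontal PL compression ``coupled to a matched rearrangement of the level-$(n+1)$ sub-cells of $Q$,'' but for a \emph{homogeneous} Cantor set the sub-cell arrangement in the $E$-directions is the same at every $y$ and does not depend on $x$; so either the vertical rearrangement is a fixed quasisymmetry of $E$ (in which case the map is a genuine product and fails quasisymmetry for exactly the reason you identify), or the vertical coordinates must genuinely depend on $x$, tracking the local compression ratio of $f$. The latter is what the paper does --- $\g(x,y)$ depends on both variables and is built via the Carleson series in Definition~\ref{g-definition} and Proposition~\ref{carlesonSeries} --- and it is not a PL ``rearrangement'' but a weighted telescoping sum over the parent hierarchy of $y$ with coefficients controlled by the Carleson sequence $(a_I)_{I\in\calS}$. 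Moreover the claim that a single distortion bound $\eta$ can be verified ``using the Ahlfors $s$-regularity of $E$'' is the opposite of what the geometry allows: the estimates in Lemmas~\ref{use-of-carleson-II}--\ref{lemma-f2-growth-rate} all rely crucially on the gap $s_{n+1} \gg r_{n+1}$ (equation \eqref{form50}), which is precisely what fails for an Ahlfors regular Cantor set. You would need to replace your set $E$ by a rapidly-branching non-regular one, and replace the ad hoc ``matched rearrangement'' by a genuine construction of a conjugate map $\g$, before the argument could be made to close.
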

Theorem \ref{mainTechnical} easily implies Theorem \ref{mainHigherDimension}: if $0 \leq t < d - 1$ is as in Theorem \ref{mainHigherDimension}, one can pick $t < s < d - 1$ and $E$ as in Theorem \ref{mainTechnical}, with $0 < \calH^{s}(E) < \infty$. Then, one can find a subset $E' \subset E$ with $0 < \calH^{t}(E') < \infty$ (see for instance Theorem 8.13 in \cite{Ma}), and apply Theorem \ref{mainTechnical} to $\calL \times \nu$ with $\nu := \calH^{t}|_{E'}$. Of course, one still needs to check that $\calL \times \nu$ is equivalent to the $(1 + t)$-dimensional Hausdorff measure on $[0,1] \times E'$; this follows from the work of Howroyd \cite{Ho}, for instance, but we also include the details in Appendix \ref{A}.

To emphasise how extremely poorly the notion of conformal dimension of measures is understood, we conclude the introduction with two questions:
\begin{question} Do there exist measures with positive and finite conformal dimension? \end{question}
For measures supported on $\R$, the answer is negative, see \cite{O}. The set $E$ constructed in Theorem \ref{mainHigherDimension} is certainly not $s$-Ahlfors-David regular, and there is a clear obstruction, why our construction could not work in that situation. So, the following particular case of the previous question seems particularly compelling:

\begin{question}\label{KSQuestion} Let $C \subset \R$ be the middle-thirds Cantor set of dimension $s = \log 2/\log 3$, and let $\mu$ be the $(1 + s)$-dimensional Hausdorff measure on $[0,1] \times C$. Is $\mu$ minimal for conformal dimension?
\end{question}

Question \ref{KSQuestion} was proposed to us by A. K\"aenm\"aki and T. Sahlsten, and it served as an initial motivation for this paper. 

Finally, we remark that the dimension $\Hd \mu$ is sometimes referred to as the \emph{upper Hausdorff dimension} of $\mu$, whereas the \emph{lower Hausdorff dimension} is $\inf \{\Hd A : \mu(A) > 0\}$, which is bounded above by the upper Hausdorff dimension. The main result of the paper remains valid, and the questions stated above remain reasonable, if the reader prefers the latter definition for $\Hd \mu$. For more information on various dimensions of measures, see Section 10 in Falconer's book \cite{Fa2}.

\subsection{Notation} For $x \in \R^{d}$ and $r > 0$, the notation $B(x,r)$ stands for a closed Euclidean ball centred at $x$, with radius $r > 0$. The symbol $\calH^{t}$ stands for $t$-dimensional Hausdorff measure in $\R^{d}$, where the dimension "$d$" of the ambient space should always be clear from the context. Lebesgue measure on $\R^{d}$ is denoted by $\mathcal{L}^{d}$; the restriction of $\calL^{1}$ to $[0,1]$ is further abbreviated to $\calL$. If $A,B \geq 0$, the notation $A \lesssim_{p} B$ means that $A \leq CB$ for a constant $C \geq 1$, which only depends on the parameter $p$; if no such parameter $p$ is specified, then the constant $C$ is absolute (unless otherwise stated). The notation $A \sim_{p} B$ is shorthand for $A \lesssim_{p} B \lesssim_{p} A$, and $A \gtrsim_{p} B$ means the same as $B \lesssim_{p} A$.

\subsection{Outline of the proof in the case $d = 2$}\label{outline} The easiest part of the construction is finding a suitable compact set $E \subset [-\tfrac{1}{2},\tfrac{1}{2}] \subset \R^{d - 1}$ with $0 < \calH^{s}(E) < \infty$. As we pointed out above, an $s$-Ahlfors-David regular choice of $E$ would not work for the other other parts of the construction, but there are virtually no other restrictions: a very generic Cantor-type construction works, as long as the "branching" is sufficiently rapid.

Assume that $E$ has been constructed, as above, and write $K := [0,1] \times E \subset \R^{2}$. To prove Theorem \ref{mainTechnical}, it suffices to pick $\epsilon > 0$ and construct a quasisymmetric embedding $F \colon K \to \R^{2}$ such that $\Hd F_{\sharp}(\calL \times \nu) < \epsilon$ for all Radon measures $\nu$ supported on $E$. The mapping $F$ will have the form
\begin{displaymath} F(x,y) = (f(x),\g(x,y)), \end{displaymath}
where $f$ is quasisymmetric homeomorphism $[0,1] \to [0,1]$ with $f_{\sharp}\calL < \epsilon$. An instance of such a map is given Tukia's example, but we have to be significantly more careful with the construction. The main challenge of the proof is finding a "conjugate" map $\g \colon K \to \R$, which makes $F$ quasisymmetric on $K$. Products of quasisymmetric maps are usually far from quasisymmetric, and taking $\g(x,y) = y$ fails spectacularly. In fact, this choice would also be inadequate in the sense that $\Hd F_{\sharp}(\calL \times \nu) \geq \Hd \nu$, whereas Theorem \ref{mainTechnical} requires $\Hd F_{\sharp}(\calL \times \nu)$ to be arbitrarily close to zero, independently of $\nu$.

It turns out that if $E$ is sufficiently far from $s$-Ahlfors-David regular, then $\g$ can be defined so that $F$, as above, is a quasisymmetric embedding of $K$, and moreover $\g$ satisfies the inequality
\begin{equation}\label{form58} |\g(x,y) - \g(x',y)| \lesssim |f(x) - f(x')|, \qquad (x,y),(x',y) \in K. \end{equation}
This implies, rather easily, that $F$ distorts the dimension of $\calL \times \nu$ by about as much as $f$ distorts the dimension of $\calL$. This will complete the proof of Theorem \ref{mainTechnical}. 

If $E$ were $s$-Ahlfors-David regular, achieving \eqref{form58} seems very difficult, unless $f$ is absolutely continuous with $f' \in A_{\infty}$, see Remark \ref{aInftyRemark}. But then $f$ would not lower the dimension of $\calL$, and $F$ would not lower the dimension of $\calL \times \nu$. So, if the answer to Question \ref{KSQuestion} is negative, then the counter-example most likely has to look quite different from the one in this paper. 

\subsection{Acknowledgements} We initially heard Question \ref{KSQuestion} from Tuomas Sahlsten in Spring 2016, but it is due to both K\"aenm\"aki and Sahlsten. Until late January 2017, we were unaware of the result of Bishop and Tyson, Theorem \ref{BTTheorem}: we believed that, for positive results, one needs to assume that $E$ is $s$-Ahlfors-David regular, and without that hypothesis, it may happen that $\Cd ([0,1] \times E) = 0$ (and in particular $\Cd \calH^{1 + s}|_{[0,1] \times E} = 0$). So, we spent several weeks trying to prove the Ahlfors-David regular variant of Theorem \ref{mainHigherDimension}, which we thought was the only non-trivial question around. In late January 2017, we heard a talk of K\"aenm\"aki at the University of Helsinki, where he used Bishop and Tyson's theorem in full generality. After a short phase of disbelief, and discussions with K\"aenm\"aki, we realised that our attempts, which did not work for Ahlfors-David regular sets $E$, sufficed to settle the non-regular variant of the problem. So, we are most grateful to K\"aenm\"aki: first for inventing the hard problem (Question \ref{KSQuestion}) with Sahlsten, second for pointing out the easier variant (Theorem \ref{mainHigherDimension}), which we could actually solve, and third for useful discussions.

We are also grateful to M. Romney for making us aware of the relationship between this problem and the questions of Heinonen and Semmes, and to V. Chousionis and K. F\"assler for fruitful discussions.

\section{Constructions} \label{constructions}

\subsection{Construction of the set}\label{setConstruction} In this section, we review an entirely standard construction of a (non Ahlfors-David-regular) Cantor-type set $E \subset [-\tfrac{1}{2},\tfrac{1}{2}]$. The letter "$E$" will always be reserved for this subset of $\R$, and the set "$E \subset \R^{d - 1}$" appearing in Theorems \ref{mainHigherDimension} and \ref{mainTechnical} will, in fact, be the $(d - 1)$-fold product $E \times \ldots \times E = E^{d - 1}$. 

\begin{definition}[The set $E$] For any dyadic number $s = p_{s}/2^{m_{s}} \in (0,1)$, with $0 < p_{s} < 2^{m_{s}}$, a set $E = E_{s} \subset [-\tfrac{1}{2},\tfrac{1}{2}]$ will next be constructed via an iterative procedure, so that eventually $0 < \calH^{s}(E) < \infty$. The representation $s = p_{s}/2^{m_{s}}$ is not unique, and it will sometimes be convenient to assume that $p_{s},m_{s}$ are large integers.

Let $\calI_{0} = \{[-\tfrac{1}{2},\tfrac{1}{2}]\}$, and assume that a collection $\calI_{n}$ of closed sub-intervals of $[-\tfrac{1}{2},\tfrac{1}{2}]$ has already been defined. Write $r_{0} := 1$, and assume that, for $n \geq 1$, all the intervals in $\calI_{n}$ have equal length
\begin{displaymath} |I_{n}| = r_{n} := 3^{-2^{n + m_{s}}} \in 3^{-\N}. \end{displaymath}
Everything that follows would work equally well for any sequence $(r_{n})_{n \in \N}$ of numbers in $3^{-\N}$ with sufficiently rapid decay. Next, define the integer sequence $(m_{n})_{n \in \N}$ by requiring that
\begin{equation}\label{mrn} m_{n + 1}(r_{n + 1})^{s} = r_{n}^{s}. \end{equation}
This is possible, because, recalling that $s = p_{s}/2^{m_{s}}$,
\begin{displaymath} \frac{r_{n}^{s}}{(r_{n + 1})^{s}} = \frac{3^{-s \cdot 2^{m_{s} + n}}}{3^{-s \cdot 2^{m_{s} + n + 1}}} = 3^{(p_{s}/2^{m_{s}}) \cdot 2^{m_{s} + n}} = 3^{p_{s} \cdot 2^{n}} \in \N. \end{displaymath}
Then, define $\calI_{n + 1}$ by placing $m_{n + 1}$ equally spaced closed intervals of length $r_{n + 1}$ inside every interval of $I \in \calI_{n}$, so that this spacing is as large as possible. The spacing of consecutive intervals in $\calI_{n + 1}$ will be denoted by 
\begin{displaymath} s_{n + 1} := \min\{\dist(I_{1},I_{2}) : I_{1},I_{2} \in \calI_{n + 1} \text{ are distinct}\}, \quad n \geq 0. \end{displaymath}
Then, by \eqref{mrn},
\begin{equation}\label{rnsn} s_{n + 1} \sim \frac{r_{n}}{m_{n + 1}} = r_{n} \cdot \frac{(r_{n + 1})^{s}}{r_{n}^{s}} = r_{n} \cdot 3^{-p_{s} \cdot 2^{n}},  \end{equation}
so $s_{n + 1} < r_{n}/10$, if $p_{s} \geq 3$. On the other hand, since the sequence $(r_{n})_{n \in \N}$ decays rapidly, and $s < 1$, the spacing $s_{n + 1}$ is significantly larger than the length $r_{n + 1}$:
\begin{displaymath} \frac{r_{n + 1}}{s_{n + 1}} \sim m_{n + 1}\frac{r_{n + 1}}{r_{n}} = \left(\frac{r_{n + 1}}{r_{n}} \right)^{1 - s} = 3^{-(1 - s) \cdot 2^{m_{s} + n}}. \end{displaymath}
With this in mind, we choose $m_{s}$ so large that the ratios $r_{n + 1}/s_{n + 1}$ are uniformly bounded by $3^{-(n+1)}$. In summary,
\begin{equation}\label{form50} 3^{n+1}r_{n + 1} < s_{n + 1} < r_{n}/10, \quad n \geq 0. \end{equation}

Once all the collections $\calI_{n}$ have been constructed in this way, write
\begin{displaymath} E_{n} := \bigcup_{I \in \calI_{n}} I, \end{displaymath}
and $E := \bigcap E_{n}$. This completes the construction of $E$. Based on \eqref{mrn}, it is standard to check (or see Theorem 1.15 in Falconer's book \cite{Fa}) that the $E$ has positive and finite $s$-dimensional Hausdorff measure, and the $(d - 1)$-fold product $E^{d - 1}$ has positive and finite $s(d - 1)$-dimensional Hausdorff measure. So, by varying $s = p_{s}/2^{m_{s}} \in (0,1)$, the Hausdorff dimension of $E^{d - 1}$ attains values arbitrarily close to $d - 1$, as required by Theorem \ref{mainTechnical}.
\end{definition}

We conclude the section by introducing some additional notation:

\begin{definition}[Parents in $Y_{n}$]\label{parents} Recall the collection of intervals $\calI_{n}$ from the construction above, and let $Y_{n}$ be the set of all midpoints of intervals in $\calI_{n}$. For $y \in E$ and $n\in \N$, there exists a unique interval $I \in \calI_{n}$ containing $y$.  The \emph{level $n$ parent of $y$} is the midpoint of $I$ and is denoted by $y_{n} := y_{n}(y) \in Y_{n}$. 
Since $y_{n}$ and $y$ both belong to $I$, one has
\begin{displaymath} |y - y_{n}| \leq |I| = r_{n}. \end{displaymath}
\end{definition}

\subsection{Construction of the measure}\label{construction-measure} In this section, we construct a special doubling measure $\mu$ on the real line. This measure is associated to the quasisymmetric homeomorphism "$f$" from Section \ref{outline}. 

For the rest of the paper, we now fix a small number $\epsilon > 0$, and write $K := K^{d}_{E} := [0,1] \times E^{d - 1}$. To prove Theorem \ref{mainTechnical}, we need to construct a quasisymmetric embedding $F = F_{\epsilon} \colon K \to \R^{d}$ such that
\begin{displaymath} \Hd F_{\sharp}(\calL \times \nu) < \epsilon \end{displaymath}
for all Radon measures $\nu$ supported on $E$. Note that $F$ depends on $\epsilon$, while $K$ does not. We start by constructing a suitable doubling measure $\mu = \mu_{\epsilon}$ on the real line. Let $\calD$ be the collection of all \emph{ternary} intervals of $\R$, with length at most one:
\begin{displaymath} \calD = \bigcup_{k \in \N} \calD_{k}, \end{displaymath}
where $\calD_{0} = \{[j,j + 1) : j \in \Z\}$, and the intervals in $\calD_{k + 1}$ are obtained by partitioning each interval in $\calD_{k}$ into three half-open intervals of equal length. Then, for $n \in \N$, including $n = 0$, let $\calS_{n}$ be the ternary intervals of length $r_{n}$, where $r_{n} \in 3^{-\N}$ was defined in the previous section. So, formally,
\begin{equation}\label{calSn} \calS_{n} := \calD_{-\log_{3} r_{n}}. \end{equation}
We also write $\calS := \bigcup_{n \geq 0} \calS_{n}$. Now, following the paper \cite{GKS} of Garnett, Killip and Schul, we define the function $h \colon \R \to \{-1,2\}$ by
\begin{displaymath} h(x) := \begin{cases} 2, & \text{if } x \in [\tfrac{1}{3},\tfrac{2}{3}) + \Z, \\ -1, & \text{otherwise.} \end{cases} \end{displaymath}
Note that $h$ is constant on elements of $\calD_{1}$, and this is the only reason why ternary intervals are considered in this paper, instead of dyadic ones. The measure $\mu$ will be defined as a weak limit of the partial "Riesz products"
\begin{equation} \mu_{n} = \mathop{\prod_{j \in \calJ}}_{j < n} (1 + \alpha \cdot h(3^{j}x)) \, dx, \qquad n \geq 1, \end{equation}
where $\alpha = \alpha_{\epsilon,s} \in [0,1)$ is a suitable constant, and $\calJ = \calJ_{\epsilon} \subset \{0,\ldots,n\}$ is a non-empty collection of indices (both $\alpha$ and $\calJ$ will be specified later; $\alpha$ will be chosen quite late in \eqref{form45}, whereas $\calJ$ is specified in this section). Regardless of $\calJ$, the choice $\alpha = 0$ gives Lebesgue measure; also note that, for a fixed $n \in \N$, the measure $\mu_{n}$ is just a function, which is constant on the ternary intervals in $\calD_{n}$. The value of this constant is given by the common value of
\begin{displaymath} x \mapsto \mathop{\prod_{j \in \calJ}}_{j < n} (1 + \alpha \cdot h(3^{j}x)) \end{displaymath}
 on the interval $I$, which will be denoted by $\pi(I) = \pi_{\alpha,\calJ}(I)$. Then
 \begin{equation}\label{densityFormula} \Theta(I) := \frac{\mu(I)}{|I|} = \pi(I). \end{equation}
 The proof in \cite{GKS} shows that
\begin{equation}\label{form33} \mu = \lim_{n \to \infty} \mu_{n} := \prod_{j \in \calJ} (1 + \alpha \cdot h(3^{j}x)) \, dx \end{equation}
exists, and is a doubling measure, with doubling constant depending only on how close $\alpha$ is to $1$ (and not on the choice of $\calJ$). The details are given below for the convenience of the reader.
\begin{lemma}\label{uniform-doubling} The measure $\mu$ is doubling with a constant $D = D_{\alpha} \geq 1$, which is independent on the choice of $\calJ$.
\end{lemma}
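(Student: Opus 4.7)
The plan is to follow the classical Riesz-product argument of Garnett--Killip--Schul \cite{GKS}, with the entire book-keeping arranged so that every constant depends only on $\alpha$, not on $\calJ$. First, I would verify existence of the weak-$\ast$ limit by showing that for each $I \in \calD_m$ and every $n \geq m$, $\mu_n(I) = \pi(I) |I|$. Since $\pi_m$ is constant on $I$ and the three $\calD_{m+1}$-children of $I$ receive multiplicative factors $(1-\alpha), (1+2\alpha), (1-\alpha)$ when $m \in \calJ$ (and $1,1,1$ otherwise), their $\mu_{m+1}$-masses sum back to $\mu_m(I)$ in both cases. Induction on $n$ then gives stabilization on the generating algebra $\calD$, so $\mu$ is well-defined as a Radon measure with density $\pi(I)$ on each $I \in \calD$, consistent with \eqref{densityFormula}.

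The core step is the adjacency estimate $\pi(I)/\pi(J) \leq C_\alpha := (1+2\alpha)/(1-\alpha)$ for any two adjacent $I, J \in \calD_n$ of common length, uniformly in $n$ and $\calJ$. I would prove it by base-$3$ arithmetic. The passage $I \to J$ corresponds to incrementing the $n$-th ternary digit of the left endpoint, possibly with carries. Since $h(3^j x) = 2$ precisely when the $(j+1)$-st ternary digit of $x$ equals $1$, and $h(3^j x) = -1$ when that digit is $0$ or $2$, every carry of the form $2 \to 0$ leaves the corresponding Riesz factor unchanged. The only nontrivial contribution comes from the top of the carry chain, where a single digit switches from $0 \to 1$ or $1 \to 2$, producing a single ratio in $\{1, (1+2\alpha)/(1-\alpha), (1-\alpha)/(1+2\alpha)\}$, equal to $1$ whenever the pivot index lies outside $\calJ$.

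From the adjacency estimate, doubling follows routinely. Given $B(x,2r)$, choose $n$ with $3^{-n-1} < r \leq 3^{-n}$ and let $I_0 \in \calD_n$ be the interval containing $x$. Then $B(x,2r)$ meets at most five intervals of $\calD_n$, each within two adjacency steps of $I_0$, so $\mu(B(x,2r)) \leq 5 C_\alpha^2 \mu(I_0)$. Conversely, $B(x,r)$ contains at least one $\calD_{n+3}$-subinterval of $I_0$, and iterating the trivial parent-to-child lower bound $\mu(I_{\mathrm{child}}) \geq ((1-\alpha)/3)\, \mu(I_{\mathrm{parent}})$ three times yields $\mu(B(x,r)) \geq ((1-\alpha)/3)^3 \mu(I_0)$. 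Dividing gives doubling with a constant $D_\alpha$ depending only on $\alpha$. The only substantive point in the entire argument is the adjacency estimate, and the key observation is arithmetical: $h$ cannot distinguish between ternary digits $0$ and $2$, so all intermediate carries are invisible to the Riesz product, which is precisely what makes the doubling constant independent of $\calJ$.
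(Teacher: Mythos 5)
Your proof is correct and takes essentially the same approach as the paper: both reduce doubling to the adjacency estimate $\pi(I_1)/\pi(I_2) \leq (1+2\alpha)/(1-\alpha)$ for adjacent ternary intervals of the same length, established by base-$3$ carry arithmetic. The paper phrases the key step as ``the number of ternary digits equal to $1$ changes by at most one across an adjacency,'' whereas you trace the carry chain directly, but the heart of both is the same observation that $h$ does not distinguish digits $0$ and $2$, so carries are invisible to the Riesz product except at the single pivot position.
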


\begin{proof} It clearly suffices to prove that
\begin{equation}\label{form49} \frac{\mu(I_{1})}{\mu(I_{2})} \leq \frac{1 + 2\alpha}{1 - \alpha} \end{equation}
for every pair of adjacent ternary intervals $I_{1},I_{2}$. For ternary intervals of unit length, this is clear, because $\mu([j,j + 1)) = 1$ for $j \in \Z$. Next, fix $n \geq 1$. Every ternary interval $I=[x,x+3^{-n}) \in \calD_{n}$ has a unique representation $(x_{0}(I),x_{1}(I),\ldots,x_{n}(I))$, where $x_{0}(I) = [x]$ is the integer part of $x$ and $0.x_{1}(I)x_{2}(I)\ldots x_{n}(I)$ is the base 3 representation of $x-[x]$. For $I \in \calD_{n}$, write
\begin{displaymath} 1(I) := \card\{1 \leq j \leq n : x_{j}(I) = 1\}. \end{displaymath}
Now, if $I_{1},I_{2} \in \calD_{n}$ are adjacent, we claim that $|1(I_{1}) - 1(I_{2})| \leq 1$. Indeed, if $j_{0} \geq 1$ is the first index such that $x_{j_{0}}(I_{1}) = 1$ and $x_{j_{0}}(I_{2}) = 0$, say, then the adjacency of $I_{1}$ and $I_{2}$ forces
\begin{displaymath} x_{j}(I_{1}) = 0 \quad \text{and} \quad x_{j}(I_{2}) = 2 \end{displaymath}
for $j_{0} < j \leq n$. This proves the claim. Next, let
\begin{displaymath} \calJ_{n} := \calJ \cap \{1,\ldots,n\}, \end{displaymath}
and for $I \in \calD_{n}$, write $1_{\calJ}(I) := \card\{1 \leq j \leq n : j \in \calJ \text{ and } x_{j}(I) = 1\}$. Then also $|1_{\calJ}(I_{1}) - 1_{\calJ}(I_{2})| \leq 1$ for adjacent $I_{1},I_{2} \in \calD_{n}$ by the previous argument, and 
\begin{displaymath} \Theta(I) = (1 + 2\alpha)^{1_{\calJ}(I)}(1 - \alpha)^{|\calJ_{n}| - 1_{\calJ}(I)}, \qquad I \in \calD_{n}, \: n \geq 1. \end{displaymath}
The inequality \eqref{form49} follows. \end{proof}

 The doubling constant of $\mu$ will be denoted by $D = D_{\alpha} = D_{\epsilon,s} \geq 1$:
\begin{displaymath} \mu(I_{1}) \leq D\mu(I_{2}), \end{displaymath}
whenever $I_{1},I_{2} \subset \R$ are adjacent intervals of the same length. If $\epsilon$ is small, the constant $\alpha$ needs to be chosen close to one, which increases the doubling constant $D$.
We also note that there exists a constant $\tau = \tau_{\alpha} < 1$ with the following property: If $I$ is a ternary interval and $J \subset I$ is one of the ternary children of $J$, then $\mu(J) \leq \tau \cdot \mu(I)$.
Inductively, we obtain the following:  If $J\subset I$ are ternary intervals of length $3^{-j}$ and $3^{-i}$ respectively then
\begin{equation}\label{tau}\mu(J) \leq \tau^{(j-i)}\mu(I).\end{equation}

For the eventual construction of $F$ to work, we require three fairly abstract properties from $\mu$, listed below as \nref{G0}-\nref{G2}. In the remainder of the section, we will verify that the properties are satisfied, if the index set $\calJ$ is chosen appropriately.
\begin{enumerate}
\item[(G0) \phantomsection\label{G0}] The measure $\mu$ resembles Lebesgue measure for all large scales, where the definition of "large" depends on $\alpha$ (which in turn only depends on $\epsilon$ and $s$). More precisely, for a suitable integer $n_{\alpha} \in \N$ to be determined later, the following holds: if $I \subset \R$ is any interval of length $|I| \geq r_{n_{\alpha}}$, then $|I|/3 \leq \mu(I) \leq 2|I|$.
Moreover, if $I$ is, in addition, a ternary interval, then $\mu(I)=|I|$.
\item[(G1) \phantomsection\label{G1}] If $I \in \calS_{n - 1}$, $n \geq 1$, and $J \subset 3I$ is any interval of length $s_{n} \leq |J| \leq r_{n - 1}$, then 
\begin{displaymath} \Theta(J) \sim_{D} \Theta(I), \end{displaymath}
where $\Theta(P) = \mu(P)/\ell(P)$.
Moreover, we have $\Theta(J)=\Theta(I)$ if, in addition, $J\subset I$ is a ternary interval.
\item[(G2) \phantomsection\label{G2}] For a ternary interval $I \in \calD_{n}$, let $I^{r}$ be the \emph{right neighbour} of $I$: that is, $I_{r} \in \calD_{n}$ is the interval immediately to the right from $I$. Define the coefficient
\begin{displaymath} a_{I} := \frac{\mu(I^{r}) - \mu(I)}{\mu(I)}. \end{displaymath}
The numbers $(a_{I})_{I \in \calS}$ form an \emph{$\calS$-Carleson sequence}. This means that
\begin{equation}\label{sparseCarleson} \mathop{\sum_{I \in \calS}}_{I \subset J} |a_{I}|\mu(I) = \mathop{\sum_{I \in \calS}}_{I \subset J} |\mu(I^{r}) - \mu(I)| \leq C_{\infty}\mu(J), \qquad J \in \calS, \end{equation}
where $C_{\infty} \geq 1$ is a constant depending only on $D$.
Note that, by the doubling condition, we immediately have $|a_{I}|\leq D$.
\end{enumerate}

Heuristically, condition \nref{G1} requires that, for scales $r$ with $s_{n} \leq r \leq r_{n - 1}$, the $\mu$-density of intervals of length $r$ is approximately determined by the density of a "parent" of length $r_{n - 1}$ (the densities among the parents can differ significantly, however). Note that this places no restrictions on how the density of $\mu$ varies on scales $r$ with $r_{n} < r < s_{n}$. Since $r_{n}$ is significantly smaller than $s_{n}$ by the rapid decay of the sequence $(r_{n})$, and the assumption $s < 1$, this leaves plenty of freedom to make $\mu$ into a highly singular measure.

\begin{remark}\label{aInftyRemark} Condition \nref{G2} is the main reason, why our construction does not work for Ahlfors-David regular sets $E$. We need the Carleson condition \eqref{sparseCarleson} for all scales $\calS$ appearing in the construction of $E$. So, if $E$ were Ahlfors-David regular, then $\calS$ would essentially have to contain all the scales smaller than one. However, according to a result of Buckley, Theorem 2.2(iii) in \cite{Bu}, if $\mu$ is doubling, then having the Carleson condition \eqref{sparseCarleson} for all scales implies $\mu \in A_{\infty}$. We need $\mu$ to be highly singular for the purposes of dimension-distortion, so a measure $\mu \in A_{\infty}$ cannot work for us. \end{remark}

We start describing the requirements on the indices $\calJ$. Initially, let $\calJ := \N$. We will next delete three subsets of $\calJ$. The first deletion is simple: we remove from $\calJ$ all the indices 
\begin{displaymath} j \in \{0,\ldots,2^{n_{\alpha} + m_{s}}\}, \end{displaymath}
where $n_{\alpha} \in \N$ is the number from \nref{G0}. Recall that $r_{n_{\alpha}} = 3^{-2^{n_{\alpha} + m_{s}}}$. Now, if $I$ is any ternary interval of length at least $r_{n_{\alpha}}/3$, then
\begin{displaymath} \mu(I) = \pi_{\alpha,\mathcal{J}}(I)|I| = |I|, \end{displaymath}
since the product in the definition of $\pi_{\alpha,\mathcal{J}}(I)$ is empty. It follows that if $I$ is an arbitrary interval of length $|I| \geq r_{n_{\alpha}}$, then $|I|/3 \leq \mu(I) \leq 2|I|$. This gives \nref{G0}.

 The second deletion is specified by the following requirement:
\begin{equation}\label{form34} n \geq 1 \text{ and } s_{n} < 3^{-j} \leq r_{n - 1} \quad \Longrightarrow \quad j \notin \calJ. \end{equation}
\begin{lemma} If $\mu$ has the form \eqref{form33}, and the collection $\calJ$ satisfies the requirement \eqref{form34}, then $\mu$ has property \textup{\nref{G1}}.
\end{lemma}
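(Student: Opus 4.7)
My plan is to treat the two assertions in \nref{G1} separately: the exact identity $\Theta(J) = \Theta(I)$ for ternary $J \subset I$, where hypothesis \eqref{form34} plays its only role, and a doubling-based comparability $\Theta(J) \sim_D \Theta(I)$ for the remaining $J \subset 3I$.

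The starting point is the Riesz-product identity $\Theta(P) = \pi_{\alpha,\calJ}(P)$ for every ternary $P \in \calD_k$: expanding the tail $\prod_{j \in \calJ,\, j \geq k}(1 + \alpha\, h(3^j x))$ into multilinear terms and using that $\int_T h(3^j x)\,dx = 0$ on any ternary interval $T$ of length $3^{-j}$, all cross terms integrate to zero, leaving $\mu(P) = \pi(P)|P|$. Writing $k_{n-1} := -\log_3 r_{n-1}$, suppose $J \in \calD_\ell$ is ternary with $J \subset I \in \calS_{n-1}$ and $s_n \leq 3^{-\ell} \leq r_{n-1}$. Then
\[
\frac{\Theta(J)}{\Theta(I)} \;=\; \prod_{\substack{j \in \calJ \\ k_{n-1} \leq j < \ell}} \bigl(1 + \alpha\, h(3^j x)\bigr), \qquad x \in J.
\]
Every index $j$ in this range satisfies $s_n \leq 3^{-\ell} < 3^{-j} \leq r_{n-1}$, so hypothesis \eqref{form34} forces $j \notin \calJ$. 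The product is empty and $\Theta(J) = \Theta(I)$.

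For the general case, I would decompose $3I$ as the union of three adjacent $\calS_{n-1}$-intervals $I_{-1}, I, I_1$, with $\Theta(I_{\pm 1}) \sim_D \Theta(I)$ by Lemma \ref{uniform-doubling}. The previous step applied inside each of $I_{-1}, I, I_1$ then yields $\Theta(J') \sim_D \Theta(I)$ for every ternary $J' \subset 3I$ with $|J'| \in [s_n, r_{n-1}]$. Finally, for an arbitrary interval $J \subset 3I$ with $|J| \in [s_n, r_{n-1}]$, pick the unique integer $\ell^*$ with $3^{-\ell^*} \in [s_n, 3s_n)$, cover $J$ by $O(1 + |J|/s_n)$ consecutive ternary intervals of length $3^{-\ell^*}$ sitting in a fixed dilate of $I$, and sum to obtain the upper bound $\mu(J) \lesssim_D \Theta(I)|J|$. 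For the lower bound, apply doubling to pass from $\mu(J)$ to $\mu(J^*)$ for a bounded enlargement $J^*$ of $J$ which, by construction, contains a complete ternary interval of length $3^{-\ell^*}$. The main technical obstacle is keeping these auxiliary ternary intervals in a bounded dilate of $I$ so that only $O(1)$ applications of the doubling constant enter; splitting into the cases $|J| \geq 9s_n$ (where $J$ already contains the required ternary interval) and $|J| < 9s_n$ (where a neighbourhood of constant multiple suffices) resolves this cleanly.
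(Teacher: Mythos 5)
Your approach mirrors the paper's: prove the exact identity $\Theta(J)=\Theta(I)$ for ternary $J\subset I$ from the density formula \eqref{densityFormula} together with the deletion rule \eqref{form34}, then pass to arbitrary $J\subset 3I$ by doubling. The exact-identity step is correct, and the index chain $s_n\leq 3^{-\ell}<3^{-j}\leq r_{n-1}$ showing that every intermediate factor is deleted by \eqref{form34} is exactly right. The paper compresses the doubling reduction to a single sentence; your instinct to spell it out is sound.

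There is, however, a gap in your lower bound in the case $|J|\geq 9s_n$. Having fixed $3^{-\ell^*}\in[s_n,3s_n)$, you claim that since ``$J$ already contains the required ternary interval'' the bound follows. But a single ternary $P\subset J$ of length $3^{-\ell^*}$ gives only $\mu(J)\geq\mu(P)=\Theta(P)\,3^{-\ell^*}\sim_D\Theta(I)\,s_n$, which falls well short of $\Theta(I)\,|J|$ when $|J|\gg s_n$. The fix is to sum, just as you do for the upper bound: $J$ contains $\gtrsim |J|/3^{-\ell^*}$ pairwise disjoint ternary intervals of length $3^{-\ell^*}$, each of $\mu$-mass $\gtrsim_D\Theta(I)\,3^{-\ell^*}$, so the total is $\gtrsim_D\Theta(I)|J|$. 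Alternatively, a choice that removes the case split entirely: take $3^{-\ell^*}$ to be the smallest power of $3$ that is $\geq|J|$ (still in $[s_n,r_{n-1}]$, since $s_n\leq|J|\leq r_{n-1}$ and $r_{n-1}$ is a power of $3$). Then $J$ meets at most two adjacent ternary intervals of length $3^{-\ell^*}<3|J|$, each of $\mu$-mass $\sim_D\Theta(I)\,3^{-\ell^*}\sim\Theta(I)|J|$ by the exact identity and finitely many applications of doubling, and both the upper bound and the lower bound (via a bounded enlargement $J^*\supset J$ containing one full such interval) follow at once.
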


\begin{proof} Since $\mu$ is $D$-doubling, and the implicit constants in \nref{G1} are allowed to depend on $D$, it suffices to verify the "moreover" statement: if $I \in \calS_{n - 1}$, $n \geq 1$, and $J \subset I$ is a ternary subinterval with $s_{n} \leq |J| \leq r_{n - 1}$, then $\Theta(J) = \Theta(I)$. This follows immediately from the density formula \eqref{densityFormula}, observing (via \eqref{form34}) that the products defining $\pi(I)$ and $\pi(J)$ contain precisely the same indices of $\calJ$. \end{proof}

We turn to the final property \nref{G2}, and delete a third subset from $\calJ$:
\begin{equation}\label{form38} n \geq 1 \text{ and } r_{n} < 3^{-j} \leq 3^{n - 1}r_{n}  \quad \Longrightarrow \quad j \notin \calJ. \end{equation}
There will be no further deletions from $\calJ$, so the definition of $\calJ$ is now complete. The indices remaining in $\calJ$ can expressed as follows:
\begin{equation}\label{defCalJ} j \in \calJ \quad \Longleftrightarrow \quad j > 2^{n_{\alpha} + m_{s}} \text{ and } \quad 3^{n - 1}r_{n} < 3^{-j} \leq s_{n} \end{equation}
for some $n \geq 1$. These ranges are always non-empty by \eqref{form50}.

The usefulness of the deletion \eqref{form38} is explained by the following lemma:
\begin{lemma} Assume that $J \in \calS_{n}$, and $j \notin \calJ$ for $r_{n + 1} < 3^{-j} \leq 3^{A}r_{n + 1}$, where $A \in \N$. Assume also that $3^{A}r_{n + 1} \leq r_{n}$. Then
\begin{displaymath} \mathop{\sum_{I \in \calS_{n + 1}}}_{I \subset J} |\mu(I^{r}) - \mu(I)| = \mathop{\sum_{I \in \calS_{n + 1}}}_{I \subset J} |a_{I}|\mu(I) \leq \left(\frac{2}{3^{A}} + (D + 1)\tau^{A} \right) \cdot \mu(J). \end{displaymath}
\end{lemma}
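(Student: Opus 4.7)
The plan is to exploit the deletion hypothesis to show that $\mu$ is a constant multiple of Lebesgue measure on ternary blocks of length $3^{A} r_{n+1}$, which collapses the sum to contributions from just the interior boundaries of these blocks plus a single right-end term. Writing $k = -\log_{3} r_{n+1}$, the hypothesis forbids $\calJ$ from containing any $j$ with $k - A \leq j < k$. Fix a ternary interval $B$ of length $3^{A} r_{n+1} = 3^{-(k-A)}$. The density $\pi(B)$ is the product over $j \in \calJ$ with $j < k - A$, and each such factor $(1 + \alpha \cdot h(3^{j}x))$ is constant on $B$, because $h(3^{j}x)$ is constant on ternary intervals of length $3^{-(j+1)} \geq 3^{-(k-A)} = |B|$. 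Combined with the hypothesis, this yields $\pi(I) = \pi(B)$, and hence $\mu(I) = r_{n+1} \pi(B)$, for every $I \in \calS_{n+1}$ with $I \subset B$.

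With this established, I would partition the intervals $I \in \calS_{n+1}$ contained in $J$ according to which ternary block $B$ of length $3^{A} r_{n+1}$ contains them; these blocks tile $J$ since $3^{A}r_{n+1}$ divides $r_{n}$. When $I$ and $I^{r}$ lie in the same block the term $|\mu(I^{r}) - \mu(I)|$ vanishes, so the only surviving contributions come from $I$ rightmost in its block, of which there are two kinds: (i) $B$ and its right neighbour $B^{r}$ both lie in $J$, in which case the term equals $r_{n+1}|\pi(B^{r}) - \pi(B)| \leq r_{n+1}(\pi(B) + \pi(B^{r}))$; or (ii) $I$ is the rightmost interval $I_{*}$ of $\calS_{n+1}$ inside $J$, so $I_{*}^{r}$ lies outside $J$. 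For the sum over (i), each $\pi(B)$ is counted at most twice, giving
\[ \mathop{\sum_{B \subset J}}_{B^{r} \subset J} r_{n+1}(\pi(B) + \pi(B^{r})) \leq 2 r_{n+1} \sum_{B \subset J} \pi(B) = \frac{2}{3^{A}} \sum_{B \subset J} \mu(B) = \frac{2}{3^{A}} \mu(J). \]

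For the remaining term (ii), the doubling condition gives $\mu(I_{*}) + \mu(I_{*}^{r}) \leq (1+D)\mu(I_{*})$, and since $I_{*}$ is a ternary sub-interval of $J$ with $\log_{3}(|J|/|I_{*}|) = \log_{3}(r_{n}/r_{n+1}) \geq A$ (using $3^{A} r_{n+1} \leq r_{n}$), the geometric-decay estimate \eqref{tau} yields $\mu(I_{*}) \leq \tau^{A} \mu(J)$. Hence this contribution is at most $(D+1)\tau^{A} \mu(J)$, and adding the two pieces gives exactly $\left( 2/3^{A} + (D+1)\tau^{A} \right) \mu(J)$. The main point requiring care is the first step, namely recognising that the deletion condition on $\calJ$ forces $\pi$ to be genuinely constant at scale $3^{A} r_{n+1}$; after that the remainder is a telescoping pair count plus a single application of \eqref{tau}.
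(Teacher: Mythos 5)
Your argument is correct and follows essentially the same route as the paper: you observe that the deletion of indices makes the density constant on ternary blocks of length $3^{A}r_{n+1}$ (the paper phrases this via the quantity $\Theta_{n}^{n+1}$ being constant on such blocks), so only block-boundary terms survive, which telescope to $\tfrac{2}{3^{A}}\mu(J)$, plus a single rightmost term handled by \eqref{tau} and doubling. The bookkeeping and the final constant $\tfrac{2}{3^{A}}+(D+1)\tau^{A}$ match the paper exactly.
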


\begin{proof} Consider $I \in \calS_{n + 1}$ with $I \subset J$. Regardless of the choice of $\calJ$, the expression
\begin{displaymath} \Theta_{n}^{n + 1}(x) := \mathop{\prod_{j \in \calJ}}_{r_{n + 1} < 3^{-j} \leq r_{n}} (1 + \alpha \cdot h(3^{j}x)) \end{displaymath}
is always constant on ternary intervals of length $r_{n + 1}$. So, we may write $\Theta_{n}^{n + 1}(x) := \Theta_{n}^{n + 1}(I)$ for $I \in \calS_{n}$. Then
\begin{equation}\label{form35} \mu(I) = \Theta_{n}^{n + 1}(I)\frac{|I|}{|J|} \cdot \mu(J), \qquad \text{for } I \in \calS_{n + 1} \text{ and } I \subset J. \end{equation}
Now, if $j \notin \calJ$ for $r_{n + 1} < 3^{-j} \leq 3^{A}r_{n + 1}$, then in fact $\Theta_{n}^{n + 1}(x)$ is constant on ternary intervals of length $3^{A}r_{n + 1}$. For $I \in \calS_{n + 1}$, let $\hat{I}$ be the unique ternary interval of length $3^{A}r_{n + 1}$ containing $I$; as before, we will write $\Theta_{n}^{n + 1}(\hat{I})$ for the value of $\Theta_{n}^{n + 1}(x)$ for $x \in \hat{I}$. There are two kinds of intervals in $I \in \calS_{n + 1}$ with $I \subset J$: those with $\hat{I} = \widehat{I^{r}}$, and those with $\hat{I} \neq \widehat{I^{r}}$. For $I$ of the first kind, \eqref{form35} shows that
\begin{displaymath} a_{I}\mu(I) = \mu(I^{r}) - \mu(I) = 0, \end{displaymath}
and consequently 
\begin{equation}\label{form36} \mathop{\sum_{I \in \calS_{n + 1}}}_{\hat{I} = \widehat{I^{r}} \subset J} |a_{I}|\mu(I) = 0. \end{equation}
For the intervals of the second kind, there are still two different possibilities: either $\widehat{I^{r}} \subset J$, or $\widehat{I^{r}} \not\subset J$. There is exactly one interval $I \in \calS_{n + 1}$ with $I \subset J$ of the latter kind, and we deal with it later. For now, assume that $\hat{I},\widehat{I^{r}} \subset J$. Then
\begin{displaymath} \mu(\hat{I}) = \Theta_{n}^{n + 1}(\hat{I})\frac{|\hat{I}|}{|J|} \cdot \mu(J) \quad \text{and} \quad \mu(\widehat{I^{r}}) = \Theta_{n}^{n + 1}(\widehat{I^{r}})\frac{|\widehat{I^{r}}|}{|J|} \cdot \mu(J), \end{displaymath}
so that
\begin{align*} |a_{I}|\mu(I) = |\mu(I^{r}) - \mu(I)| & = \left| \Theta_{n}^{n + 1}(\hat{I})\frac{|\hat{I}|}{3^{A}|J|} \cdot \mu(J) - \Theta_{n}^{n + 1}(\widehat{I^{r}})\frac{|\widehat{I^{r}}|}{3^{A}|J|} \cdot \mu(J) \right|\\
& = \frac{|\mu(\hat{I}) - \mu(\widehat{I^{r}})|}{3^{A}} \leq \frac{[\mu(\hat{I}) + \mu(\widehat{I^{r}})]}{3^{A}}.  \end{align*} 
It follows that
\begin{equation}\label{form37} \mathop{\sum_{I \in \calS_{n + 1}}}_{\hat{I} \neq \widehat{I^{r}} \subset J} |a_{I}|\mu(I) \leq \frac{2 \cdot \mu(J)}{3^{A}}, \end{equation}
noting that every $\hat{I}$ (and $\widehat{I^{r}}$) in the sum above corresponds to exactly one interval $I \in \calS_{n + 1}$ (namely the rightmost subinterval of $\hat{I}$). So, viewing \eqref{form36} and \eqref{form37}, the only remaining task is to treat the term $|a_{I}|\mu(I)$ for the rightmost interval $I \subset J$ with $|I| = r_{n + 1}$ (that is, the unique interval $I$ with $\widehat{I^{r}} \not\subset J$). Since $3^{A}r_{n + 1} \leq r_{n} = |J|$, we have
\begin{displaymath} \mu(I) \leq \tau^{A}\mu(J) \quad \text{and} \quad \mu(I^{r}) \leq D\mu(I) \leq D\tau^{A}\mu(J). \end{displaymath}
Consequently $|a_{I}|\mu(I) \leq \tau^{A}(D + 1)\mu(J)$, which proves the lemma.
\end{proof}

Now, we can easily verify the property \nref{G2}:
\begin{lemma} If \eqref{form38} is satisfied, then $\mu$ satisfies the $\calS$-Carleson condition
\begin{displaymath} \mathop{\sum_{I \in \calS}}_{I \subset J} |a_{I}|\mu(I) \lesssim_{\alpha} \mu(J), \qquad J \in \calS. \end{displaymath}
\end{lemma}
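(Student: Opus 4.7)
The plan is to iterate the preceding lemma over all ternary levels inside $J$ and sum the resulting geometric series. Fix $J \in \calS_{N}$ and decompose
$$\sum_{I \in \calS,\, I \subset J} |a_{I}|\mu(I) = |a_{J}|\mu(J) + \sum_{m > N}\,\sum_{I \in \calS_{m},\, I \subset J} |a_{I}|\mu(I).$$
The base term is controlled by $|a_{J}|\mu(J) \leq D\mu(J)$, since the doubling property gives $|a_{I}| \leq D$ for every $I$; the task is to show that each level-$m$ contribution in the remaining sum decays geometrically in $m$ while remaining proportional to $\mu(J)$.

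For a fixed level $m > N$, I would group the intervals $I \in \calS_{m}$ with $I \subset J$ according to their ternary parent $J' \in \calS_{m-1}$. The preceding lemma applies to each such $J'$ with $n = m-1$ and $A = m-1$: the hypothesis that $j \notin \calJ$ whenever $r_{m} < 3^{-j} \leq 3^{m-1}r_{m}$ is precisely \eqref{form38} read at level $m$, and the side condition $3^{m-1}r_{m} \leq r_{m-1}$ is supplied by \eqref{form50} (which even gives $3^{m}r_{m} < r_{m-1}/10$). The lemma then yields
$$\sum_{I \in \calS_{m},\, I \subset J'} |a_{I}|\mu(I) \leq \left(\frac{2}{3^{m-1}} + (D+1)\tau^{m-1}\right)\mu(J').$$
Because the ternary intervals $J' \in \calS_{m-1}$ contained in $J$ tile $J$, we have $\sum_{J' \subset J}\mu(J') = \mu(J)$, and summing the displayed bound over such $J'$ replaces $\mu(J')$ by $\mu(J)$ on the right-hand side.

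Summing over $m > N$ then produces the convergent series $\sum_{k \geq 0}\bigl(2 \cdot 3^{-k} + (D+1)\tau^{k}\bigr)$, whose value depends only on $\alpha$ through $D = D_{\alpha}$ and $\tau = \tau_{\alpha} < 1$. Together with the base term this gives the Carleson estimate with a constant $C_{\infty} = C_{\infty}(\alpha)$ as required. No new idea is needed; the only point demanding care is the bookkeeping \textemdash{} matching the gap $A = m-1$ provided by \eqref{form38} exactly with what the preceding lemma consumes, and invoking \eqref{form50} for the mild side condition $3^{A}r_{m} \leq r_{m-1}$ at every scale.
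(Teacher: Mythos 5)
Your proposal is correct and follows essentially the same route as the paper: both proofs apply the preceding lemma at each ternary level (with $A$ equal to the generation of the parent interval, as furnished by \eqref{form38} and the side condition \eqref{form50}), tile the parent generation to replace $\mu(J')$ by $\mu(J)$, and sum the resulting convergent series in $D$ and $\tau$. The paper packages this as an explicit induction on the truncation depth $m$, whereas you sum directly over levels — a cosmetic difference only.
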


\begin{proof} Let $\delta_{k} := (2/3^{k} + (D + 1)\tau^{k})$ be the constant (with $A = k \geq 1$) from the previous lemma. Consider an interval $J \in \calS_{n}$. We verify by induction on $m \geq 1$ that
\begin{displaymath}\mathop{\mathop{\sum_{I \in \calS}}_{I \subset J}}_{|I| \geq r_{n + m}} |a_{I}|\mu(I) \leq \left(D + \sum_{k = 1}^{m} \delta_{n + k - 1} \right)\mu(J). \end{displaymath}
This is a direct consequence of the previous lemma. First, for $m = 1$, note that
\begin{displaymath} \mathop{\mathop{\sum_{I \in \calS}}_{I \subset J}}_{|I| \geq r_{n + 1}} |a_{I}|\mu(I) = |a_{J}|\mu(J) + \mathop{\sum_{I \in \calS_{n + 1}}}_{I \subset J} |a_{I}|\mu(I) \leq (D + \delta_{n})\mu(J). \end{displaymath}
Next, if the claim has already been verified for some $m \geq 1$, the case $m + 1$ is proven as follows:
\begin{align*} \mathop{\mathop{\sum_{I \in \calS}}_{I \subset J}}_{|I| \geq r_{n + m + 1}} |a_{I}|\mu(I) & = \mathop{\mathop{\sum_{I \in \calS}}_{I \subset J}}_{|I| \geq r_{n + m}} |a_{I}|\mu(I) + \mathop{\sum_{I \in \calS_{n + m}}}_{I \subset J} \mathop{\sum_{I' \in \calS_{n + m + 1}}}_{I' \subset I} |a_{I'}|\mu(I')\\
& \leq \left(D + \sum_{k = 1}^{m}\delta_{n + k - 1} \right)\mu(J) + \mathop{\sum_{I \in \calS_{n + m}}}_{I \subset J} \delta_{n + m} \cdot \mu(I)\\
& = \left(D + \sum_{k = 1}^{m + 1} \delta_{n + k - 1} \right)\mu(J). \end{align*}
The lemma follows by letting $m \to \infty$, noting that the geometric series converges. 
\end{proof}

We have now shown that the Riesz product $\mu$ associated to the index set $\calJ$, defined in \eqref{defCalJ}, satisfies the good properties \nref{G0}-\nref{G2}, with implicit constants depending only on $\alpha$ (and hence $\epsilon$ and $s$).

\subsection{Construction of the map}
 Recalling the construction of the measure $\mu$ from the previous section, and in particular observing that $\mu$ is a doubling measure with $\mu([0,1]) = 1$, we set
\begin{displaymath} f(t) := \mu([0,t]). \end{displaymath}
It is well-known that this defines a quasisymmetric homeomorphism $f \colon [0,1] \to [0,1]$, and it turns out that $\Hd f_{\sharp}\calL < \epsilon$, if the parameter $\alpha$ was chosen close enough to $1$ in the previous section, depending on $\epsilon$ and $s$. In this section, we construct another map $\g \colon [0,1] \times E \to \R$, which is "conjugate" to $f$, in the sense that the map $F(x_{1},x_{2},\ldots,x_{d})=(f(x_{1}),\g(x_{1}, x_{2}),\ldots,\g(x_{1},x_{d}))$ is a quasisymmetric embedding of $[0,1] \times E^{d - 1}$ to $\R^{d}$. A key property of $\g$ will be the following: for any $x,x'\in [0,1]$ and $y\in E$,
\begin{equation}\label{lipschitzProp} |\g(x,y) - \g(x',y)| \leq C_{\infty}|f(x) - f(x')|, \end{equation}
where $C_{\infty}$ is the constant from the Carleson condition in \nref{G2}. This allows us to transfer the dimension distortion of $f$ rather effortlessly to that of $F$.

To construct $\g$, we now need to take a somewhat abstract detour. 
\subsubsection{Carleson series} Let $\calS \subset \calD$ be a \emph{levelled} collection of ternary intervals, all contained in $[0,1)$ (in this section, "ternary" plays no particular role, but we stick to this terminology for consistency's sake). By \emph{levelled}, we mean that
\begin{displaymath} \calS = \bigcup_{k \geq 0} \calS_{k}, \end{displaymath}
where $\calS_{0} = \{[0,1)\}$, the families $\calS_{k}$ are pairwise disjoint, and each interval $I \in \calS_{k}$ is partitioned by finitely many intervals of $\calS_{k + 1}$; the theory will eventually be applied to the intervals $\calS_{n}$ introduced in \eqref{calSn}. If $I \in \calS_{k}$, then $k$ is the \emph{generation} of $I$, denoted by $k = \gen(I)$. Given a levelled collection of dyadic intervals and a probability measure $\mu$ on $[0,1)$, a sequence of complex numbers $(a_{I})_{I \in \calS}$ is called an \emph{$(\calS,\mu)$-Carleson sequence}, or simply \emph{$\calS$-Carleson sequence}, if there exists a constant $C \geq 1$ such that
\begin{equation}\label{carlesonAss} \mathop{\sum_{I \in \calS}}_{I \subset J} |a_{I}|\mu(I) \leq C\mu(J), \qquad J \in \calS. \end{equation}
This is precisely an abstract version of condition \nref{G2}. 
\begin{proposition}[Carleson series]\label{carlesonSeries} Let $\mu$ be a Borel probability measure with $\mu([0,1)) = 1$, and let $(a_{I})_{I \in \calS}$ be an $\calS$-Carleson sequence with constant $C \geq 1$, where $\calS \subset \calD$ is levelled. Then, there exist functions $\Delta_{k} \in L^{\infty}(\mu)$ with the following properties:
\begin{enumerate}
\item[\textup{(I)}\phantomsection\label{I}] For $I \in \calS$,
\begin{displaymath} \int_{I} \Delta_{\gen(I)} \, d\mu = a_{I}\mu(I). \end{displaymath}
\item[\textup{(II)}\phantomsection\label{II}] The function $\Delta_{\gen(I)}$ has the same sign (or "direction") as $a_{I}$ on $I \in \calS$. In other words, $a_{I} = 0$ implies $\Delta_{\gen(I)} \equiv 0$ on $I$, and
\begin{displaymath} \bar{a}_{I}\Delta_{I}(t) \geq 0 \text{ for $\mu$ a.e. } t \in I. \end{displaymath}
\item[\textup{(III)}\phantomsection\label{III}] The series $\sum \Delta_{k}$ converges absolutely $\mu$ almost everywhere, and moreover
\begin{displaymath} \sum_{k = 0}^{\infty} |\Delta_{k}| \leq C \quad \mu \text{ almost everywhere.} \end{displaymath}
\end{enumerate}
The sequence $(\Delta_{k})_{k \geq 0}$ will be called the \emph{Carleson series} associated with the sequence $(a_{I})_{I \in \calS}$.
\end{proposition}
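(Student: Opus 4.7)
The approach is to write
\begin{displaymath} \Delta_k := C \sum_{I \in \calS_k} \sgn(a_I) \phi_I, \end{displaymath}
where $\sgn(a_I) := a_I/|a_I|$ (and $0$ if $a_I = 0$), and $\phi_I : I \to [0,1]$ are measurable densities satisfying $\int_I \phi_I \, d\mu = b_I := |a_I|\mu(I)/C$ together with the pointwise constraint $\sum_{I \in \calS : t \in I}\phi_I(t) \leq 1$ for $\mu$-almost every $t$. Granting such densities, property \nref{I} follows from $\int_I \Delta_{\gen(I)}\, d\mu = C\sgn(a_I) b_I = a_I\mu(I)$; property \nref{II} holds because $\bar{a}_I \Delta_{\gen(I)} = C|a_I|\phi_I \geq 0$ on $I$; and property \nref{III} holds since $\sum_k |\Delta_k(t)| = C \sum_{I \ni t} \phi_I(t) \leq C$, from which absolute convergence is automatic.

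The problem is thereby reduced to the existence of the densities $\phi_I$, a measure-theoretic fractional matching question. The relevant Hall-type condition asserts that for every finite subfamily $\calT \subseteq \calS$,
\begin{displaymath} \mu\Bigl(\bigcup_{I \in \calT} I\Bigr) \geq \sum_{I \in \calT} b_I. \end{displaymath}
Using the tree structure of $\calS$, the union on the left decomposes as $\bigsqcup_{I \in \calT^{*}} I$ over the maximal elements $\calT^{*} \subseteq \calT$, while the right side is controlled via $\sum_{I \in \calT} b_I \leq \sum_{I \in \calT^{*}} S_I/C \leq \sum_{I \in \calT^{*}}\mu(I)$, where $S_I := \sum_{I' \subseteq I}|a_{I'}|\mu(I')$ and the last step is precisely the Carleson hypothesis $S_I \leq C\mu(I)$.

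To produce the $\phi_I$ concretely, I would truncate to $\calS^{N} := \bigcup_{k\leq N}\calS_k$ and process its intervals in reverse generation. At stage $I$, the disjoint sets $\{E_{I'}^{(N)}\}_{I' \subsetneq I}$ already chosen occupy $\mu$-mass $\sum_{I'\subsetneq I} b_{I'} = S_I/C - b_I$, leaving residual measure $\mu(I) - S_I/C + b_I \geq b_I$ by Carleson, so $E_I^{(N)} \subset I$ of measure $b_I$ can be selected in the complement. A diagonal weak-$L^2(\mu)$ limit as $N \to \infty$, applicable because $\calS \subseteq \calD$ is countable, yields $\phi_I := \lim_{k}\mathbf{1}_{E_I^{(N_k)}} \in [0,1]$. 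The integral condition $\int_I \phi_I \, d\mu = b_I$ survives the limit by testing against $\mathbf{1}_I$, and the pointwise disjointness $\sum_{I \in \calS^{N}}\mathbf{1}_{E_I^{(N)}} \leq 1$ transfers (on every finite subfamily first, then by monotonicity) to $\sum_{I \ni t}\phi_I(t) \leq 1$ $\mu$-a.e.\ by testing against nonnegative functions. The main technical obstacle is precisely this passage to the limit across infinitely many generations; the Carleson hypothesis is exactly what supplies the room needed at each finite stage and allows the bottom-up allocation to proceed without ever running out of space.
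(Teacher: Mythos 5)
Your proposal is correct in outline and takes a genuinely different, though structurally parallel, route from the paper's. Both arguments process the tree of intervals from the deepest level upward, tracking a pointwise $L^\infty$ budget at each stage, and pass from the truncated families $\calS^N$ to the full $\calS$ by a weak compactness argument (Banach--Alaoglu applied to $L^\infty(\mu) = (L^1(\mu))^*$ in the paper, weak sequential compactness of the unit ball of $L^2(\mu)$ in your version; since $\mu$ is finite these are interchangeable here). The substantive differences are two. First, you isolate the Hall-type condition $\mu\bigl(\bigcup_{I \in \calT} I\bigr) \geq \sum_{I \in \calT} b_I$ and reduce the proposition to a fractional-matching existence problem; this makes explicit why the Carleson hypothesis is precisely the right assumption, which the paper's proof uses but leaves implicit. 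Second, your allocation rule at each finite stage is disjoint set selection --- $\phi_I^{(N)} := \mathbf{1}_{E_I^{(N)}}$ with the $E_I^{(N)}$ pairwise disjoint --- whereas the paper's is a scalar rescaling of the running budget function, $\Delta_k|_J := \frac{a_J\tau_J}{|a_J|}\bigl(C - \sum_{j>k}|\Delta_j|\bigr)$ with $\tau_J \in (0,1]$ chosen by the intermediate value theorem.

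This second difference is where you have a small but real gap. The proposition is stated for an arbitrary Borel probability measure, and your claim that ``$E_I^{(N)}\subset I$ of measure $b_I$ can be selected in the complement'' requires $\mu$ to be non-atomic on the residual region: if $R_I := I \setminus \bigcup_{I' \subsetneq I} E_{I'}^{(N)}$ carries an atom, there need not exist any subset of exactly measure $b_I$, even though $\mu(R_I) \geq b_I$. The paper's scalar rescaling sidesteps this, since the map $\tau \mapsto \tau \int_J \bigl(C - \sum_{j>k}|\Delta_j|\bigr)\,d\mu$ is linear and continuous in $\tau$ regardless of atoms. For the paper's actual $\mu$ (a doubling measure on $\R$, hence non-atomic) your argument is fine; to prove the abstract statement, replace the indicator at the finite stage by the fractional density $\phi_I^{(N)} := \tau_I\bigl(1 - \sum_{I' \subsetneq I,\, I' \in \calS^N}\phi_{I'}^{(N)}\bigr)\mathbf{1}_I$ with $\tau_I \in [0,1]$ chosen so that $\int_I \phi_I^{(N)}\,d\mu = b_I$ --- at which point your construction and the paper's coincide, up to normalisation by $C$.
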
 
\begin{remark} The terminology comes from the fact that sums of the functions $\Delta_{k}$ correspond to Carleson sums of the sequence $(a_{I})_{I \in \calS}$. Namely, if $J \in \calS$, then
\begin{equation}\label{form13} \int_{J} \left( \sum_{k = \gen(J)}^{\infty} \Delta_{k} \right) \, d\mu = \mathop{\sum_{I \subset J}}_{I \in \calS} a_{I}\mu(I), \end{equation}
which follows immediately from \nref{I}. From \nref{II}, it follows that
\begin{equation}\label{form12} \int_{I} |\Delta_{\gen(I)}| \, d\mu = \frac{\bar{a}_{I}}{|a_{I}|} \int_{I} \Delta_{\gen(I)} \, d\mu = \frac{\bar{a}_{I}}{|a_{I}|} \cdot a_{I}\mu(I) = |a_{I}|\mu(I). \end{equation}
\end{remark}
\begin{proof}[Proof of Proposition \ref{carlesonSeries}] We start by constructing the finite Carleson series 
\begin{displaymath} (\Delta_{k}^{N})_{0 \leq k \leq N} = (\Delta_{k})_{0 \leq k \leq N} \end{displaymath}
associated with $(a_{I})_{I \in \calS_{N}}$, where $\calS_{N}$ is the following truncation of $\calS$:
\begin{displaymath} \calS_{N} = \bigcup_{k = 0}^{N} \calS_{k}. \end{displaymath}
The existence of the full series will eventually follow by an application of the Banach-Alaoglu theorem. 

The construction starts at the largest level $k = N$ and proceeds by induction towards the level $k = 0$. So, fix $I \in \calS_{N}$, and define
\begin{displaymath} \Delta_{N}^{N} := \Delta_{N}(t) = a_{I}, \qquad t \in I. \end{displaymath}
It follows from the Carleson assumption \eqref{carlesonAss} that $|\Delta_{N}| \leq C$. Since the intervals in $\calS_{N}$ partition $[0,1)$, the formula above defines $\Delta_{N}$ for all $t \in [0,1)$. It is clear that \nref{I}-\nref{II} hold. 

Next, assume that $\Delta_{N},\ldots,\Delta_{k + 1}$ have already been constructed for some $0 \leq k < N$ such that \nref{I}-\nref{II} hold for all $I \in \calS_{j}$ with $k + 1 \leq j \leq N$, and
\begin{equation}\label{form15} \sum_{j = k + 1}^{N} |\Delta_{j}(t)| \leq C, \qquad t \in [0,1). \end{equation}
Fix $J \in \calS_{k}$: the current plan is to define $\Delta_{k}$ on $J$. If $a_{J} = 0$, define $\Delta_{k} \equiv 0$ on $J$. So, assume that $a_{J} \neq 0$ in the sequel. Let $I_{1},\ldots,I_{n} \in \calS_{k + 1}$ be a partition of $I$. Then, by \eqref{form13}, and \eqref{form12} applied to $|\Delta_{k}|$ instead of $\Delta_{k}$, one has
\begin{align*} |a_{J}|\mu(J) + \int_{J} \left(\sum_{j = k + 1}^{N} |\Delta_{j}| \right) \, d\mu & = |a_{J}|\mu(J) + \sum_{i = 1}^{n} \int_{I_{i}} \left( \sum_{j = k + 1}^{N} |\Delta_{j}| \right) \, d\mu\\
& = |a_{J}|\mu(J) + \sum_{i = 1}^{n} \mathop{\sum_{I \in \calS_{N}}}_{I \subset I_{i}} |a_{I}|\mu(I)\\
& = \mathop{\sum_{I \in \calS_{N}}}_{I \subset J} |a_{I}|\mu(I) \leq C\mu(J). \end{align*} 
Consequently,
\begin{displaymath} \int_{J} \left( C - \sum_{j = k + 1}^{N} |\Delta_{j}| \right) \, d\mu \geq |a_{J}|\mu(J), \end{displaymath}
and one may pick a constant $\tau_{J} \in (0,1]$ such that
\begin{equation}\label{form14} \int_{J} \tau_{J} \left(C - \sum_{j = k + 1}^{N} |\Delta_{j}| \right) \, d\mu = |a_{J}|\mu(J). \end{equation}
Note that the integrand in \eqref{form14} is non-negative by \eqref{form15}. Now, set
\begin{displaymath} \Delta_{k}(t) := \frac{a_{J}\tau_{J}}{|a_{J}|} \left( C - \sum_{j = k + 1}^{N} |\Delta_{j}(t)| \right), \qquad t \in J. \end{displaymath}
Then $\Delta_{k}$ has the same sign as $a_{J}$ on $J$ in the sense of \nref{II}, and \nref{I} holds by \eqref{form14}. Finally, also \eqref{form15} holds with "$k + 1$" replaced by "$k$":
\begin{align*} \sum_{j = k}^{N} |\Delta_{j}(t)| & = \sum_{j = k + 1}^{N} |\Delta_{j}(t)| + \tau_{J} \left(C - \sum_{j = k + 1}^{N} |\Delta_{j}(t)| \right)\\
& = \left(\sum_{j = k + 1}^{N} |\Delta_{j}(t)| \right)(1 - \tau_{J}) + \tau_{J} \cdot C \leq C \cdot (1 - \tau_{J}) + \tau_{J} \cdot C = C. \end{align*}
This completes the definition of $\Delta_{k}$ on $J$. Since the intervals $J \in \calS_{k}$ partition $[0,1)$, this also completes the inductive definition of $\Delta_{k}$.

It remains to define the full Carleson series $(\Delta_{k})_{k \geq 0}$ associated with the sequence $(a_{I})_{I \in \calS}$. To this end, define the partial Carleson series $(\Delta_{k}^{N})_{0 \leq k \leq N}$, as above, for each $N \geq 0$. Since the sequence $(\Delta_{0}^{N})_{N \geq 0}$ is uniformly bounded (by $C$) in $L^{\infty}(\mu)$, which is the dual of $L^{1}(\mu)$, the Banach-Alaoglu theorem implies that there is a subsequence $\{0(j)\}_{j \in \N} \subset \N$ such that $(\Delta_{0}^{N_{0(j)}})_{j \geq 0}$ converges in the weak*-topology of $L^{\infty}(\mu)$ to a function $\Delta_{0} \in L^{\infty}(\mu)$:
\begin{displaymath} \int \Delta_{0}^{N_{0(j)}} \cdot g \, d\mu \to \int \Delta_{0} \cdot g \, d\mu, \qquad g \in L^{1}(\mu). \end{displaymath}
Next, by the same argument, the sequence $\{0(j)\}_{j \geq 0}$ has a further subsequence $\{1(j)\}_{j \geq 0}$ such that $(\Delta_{1}^{N_{1(j)}})_{j \geq 0}$ converges to a function $\Delta_{1} \in L^{\infty}(\mu)$ in the weak*-topology of $L^{\infty}(\mu)$. When repeated \emph{ad infinitumn}, the process produces a sequence of functions $(\Delta_{k})_{k \geq 0}$, contained in the $C$-ball of $L^{\infty}(\mu)$. We claim that $(\Delta_{k})_{k \geq 0}$ is the Carleson series associated with $(a_{I})_{I \in \calS}$. 

Condition \nref{I} is clear from the definition of weak* convergence: if $I \in \calS_{k}$, then
\begin{displaymath} \int_{I} \Delta_{k} \, d\mu = \int \Delta_{k}\chi_{I} \, d\mu = \lim_{j \to \infty} \int \Delta_{k}^{N_{k(j)}}\chi_{I} \, d\mu = a_{I}\mu(I). \end{displaymath}
Condition \nref{II} can be proven similarly: the sign of $\Delta_{k}$ cannot differ from the (common) signs of the approximating functions $\Delta_{k}^{N_{k(j)}}$ on $I$ (unless $a_{I} = 0$, in which case $\Delta_{k}$, and all the approximating functions, vanish on $I$).

To prove \nref{III}, it suffices to show that
\begin{displaymath} \int \left( \sum_{k = 0}^{M} |\Delta_{k}| \right) \cdot g \, d\mu \leq C, \end{displaymath}
for all non-negative $g \in L^{1}(\mu)$ with $\|g\|_{L^{1}(\mu)} \leq 1$, and for any $M \geq 0$. First, observe that 
\begin{displaymath} \int |\Delta_{k}| \cdot g \, d\mu = \lim_{j \to \infty} \int |\Delta_{k}^{N_{k(j)}}| \cdot g \, d\mu = \lim_{j \to \infty} \int |\Delta_{k}^{N_{M(j)}}| \cdot g \, d\mu, \quad 0 \leq k \leq M, \end{displaymath}
where the first equation is just the convergence of the sequence $\Delta_{k}^{N_{k(j)}}$ to $\Delta_{k}$, and the second follows from the fact that $\{M(j)\}_{j \geq 0}$ is a subsequence of $\{k(j)\}_{j \geq 0}$ for every $0 \leq k \leq M$. Thus
\begin{displaymath} \int \left( \sum_{k = 0}^{M} |\Delta_{k}| \right) \cdot g \, d\mu = \lim_{j \to \infty} \int \left( \sum_{k = 0}^{M} |\Delta_{k}^{N_{M(j)}}| \right) \cdot g \, d\mu \leq \lim_{j \to \infty} \int \left(\sum_{k = 0}^{N_{M(j)}} |\Delta_{k}^{N_{M(j)}}| \right) \cdot g \, d\mu \leq C, \end{displaymath}
where the last inequality follows from \eqref{form15} with "$k + 1$" replaced by "$0$". The proof of Proposition \ref{carlesonSeries} is complete. 
\end{proof}

Now, we return to the "real world" and construct the map $\g \colon [0,1]\times E \to \R$. Recall the collections $(\calS_{n})_{n \geq 0}$ of ternary intervals with lengths $(r_{n})_{n \geq 0}$ introduced in \eqref{calSn}, and the coefficients $a_{I} = [\mu(I^{r}) - \mu(I)]/\mu(I)$ from condition \nref{G2}. As mentioned above Proposition \ref{carlesonSeries}, condition \nref{G2} stipulates that the numbers $(a_{I})_{I \in \calS}$ form an $(\calS,\mu)$-Carleson sequence with respect to the probability measure $\mu|_{[0,1)}$. So, by Proposition \ref{carlesonSeries}, there exists an associated Carleson series $(\Delta_{k})_{k \geq 0}$, with
\begin{displaymath} \sum_{k = 0}^{\infty} |\Delta_{k}(t)| \leq C_{\infty} \end{displaymath}
at $\mu$ almost every $t \in [0,1)$. Recall the definition of the level $k$ parents of $y\in E$, $y_{k}(y) \in Y_{k}$ from Definition \ref{parents}.
\begin{definition}[The map $g$]\label{g-definition}
  For $(x,y)\in [0,1]\times E$ define
\begin{displaymath} \g(0,y) := \sum_{k = 0}^{\infty} \Theta([0,r_{k}])(y_{k+1}(y) - y_{k}(y))\end{displaymath}
and
\begin{displaymath} \g(x,y) := \g(0,y) + \int_{0}^{x} \sum_{k = 0}^{\infty} \frac{y_{k+1}(y) - y_{k}(y)}{r_{k}} \Delta_{k}(t) \, d\mu(t). \end{displaymath}
\end{definition}

We conclude with some basic properties of $\g$, in particular the the key inequality \eqref{lipschitzProp}:

\begin{lemma}\label{linfinity-bound}
  The function $\g \colon [0,1]\times E \to \R$ is well defined and continuous.
  Moreover, for any $0 \leq x \leq x' \leq 1$ and $y \in E$,
  \begin{equation}\label{displayedInequality} |\g(x,y) - \g(x',y)| \leq C_{\infty}\mu([x,x']) = C_{\infty}(f(x') - f(x)). \end{equation}
\end{lemma}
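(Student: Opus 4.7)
My plan is to reduce both the well-definedness of $\g$ and the displayed inequality \eqref{displayedInequality} to two uniform bounds on the summands. First, by Definition \ref{parents}, $y_k(y)$ and $y_{k+1}(y)$ both lie in the common level-$k$ interval of length $r_k$ containing $y$, so
\begin{displaymath} |y_{k+1}(y) - y_k(y)| \leq r_k \quad \text{for all } y \in E,\; k \geq 0. \end{displaymath}
Second, property \nref{III} of Proposition \ref{carlesonSeries} gives the pointwise bound $\sum_{k \geq 0} |\Delta_k(t)| \leq C_\infty$ for $\mu$-a.e.\ $t$. Together they yield
\begin{displaymath} \left|\sum_{k=0}^{\infty} \frac{y_{k+1}(y) - y_k(y)}{r_k}\,\Delta_k(t)\right| \leq \sum_{k=0}^{\infty}|\Delta_k(t)| \leq C_\infty \quad \text{for $\mu$-a.e.\ } t, \end{displaymath}
uniformly in $y$. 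Integrating over $[x, x']$ immediately produces $|\g(x,y) - \g(x',y)| \leq C_\infty \mu([x, x'])$, and the identity $\mu([x, x']) = f(x') - f(x)$ is automatic because the doubling measure $\mu$ on $\R$ has no atoms.

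To handle the boundary term $\g(0, y)$, I would exploit that $[0, r_k)$ is itself a ternary subinterval of $[0, 1)$, so iterating \eqref{tau} against $\mu([0,1)) = 1$ gives $\mu([0, r_k]) = \mu([0, r_k)) \leq \tau^{\log_3(1/r_k)} = \tau^{2^{k+m_s}}$. Combined with $|y_{k+1}(y) - y_k(y)| \leq r_k$, the $k$-th summand in $\g(0, y)$ is bounded by $\Theta([0, r_k]) \cdot r_k = \mu([0, r_k]) \leq \tau^{2^{k+m_s}}$, which decays super-exponentially and yields absolute convergence uniformly in $y \in E$.

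For continuity, continuity of $\g(\cdot, y)$ in $x$ is immediate from the Lipschitz-type inequality just established together with the continuity of $t \mapsto \mu([0, t])$. Continuity in $y$ rests on the observation that each parent map $y \mapsto y_k(y)$ is \emph{locally constant} on $E$: distinct level-$k$ intervals in $\calI_k$ are separated by $s_k > 0$, so whenever $y, y' \in E$ satisfy $|y - y'| < s_{k+1}$ we have $y_j(y) = y_j(y')$ for all $0 \leq j \leq k + 1$. Given $\eta > 0$, I would first choose $K$ so large that both $\sum_{k > K} \tau^{2^{k + m_s}} < \eta$ and $\int_0^1 \sum_{k > K} |\Delta_k(t)| \, d\mu(t) < \eta$ (the latter by dominated convergence, using the uniform majorant $\sum_k |\Delta_k| \leq C_\infty$); then for $|y - y'| < s_{K + 2}$ the first $K + 1$ terms in both $\g(0, y) - \g(0, y')$ and in the integrand defining $\g(x, y) - \g(x, y')$ cancel identically, and the tails are controlled by the two estimates above. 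Joint continuity on $[0,1] \times E$ then follows by combining this with continuity in $x$.

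The one genuinely non-routine ingredient is the super-exponential decay $\mu([0, r_k]) \leq \tau^{2^{k+m_s}}$, which is needed only to make sense of $\g(0, y)$ and to control its continuity in $y$; the Lipschitz inequality itself reduces to the $L^\infty$ Carleson bound together with $|y_{k+1} - y_k| \leq r_k$. I do not foresee any serious obstacle beyond keeping the cancellation argument in the $y$-continuity step uniform in $x$, which is immediate from the uniformity of the integrand bound.
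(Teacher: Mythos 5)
Your argument is correct and follows essentially the same route as the paper's: well-definedness and \eqref{displayedInequality} from the uniform bound $|y_{k+1}-y_k|\leq r_k$ together with property \nref{III}, plus geometric (indeed super-exponential, via \eqref{tau}) decay of $\mu([0,r_k])$ for the boundary series; and $y$-continuity via local constancy of the parent maps, with joint continuity from combining with the Lipschitz estimate in $x$. If anything you are slightly more explicit than the paper in controlling the integral tail $\int\sum_{k>K}|\Delta_k|\,d\mu$ by dominated convergence, which is a welcome bit of extra care in the continuity step.
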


\begin{proof}
  Let $(x,y)\in [0,1]\times E$.
  First note that $\mu([0,r_{k}])$ decays geometrically when $k \to \infty$ (by \eqref{tau} and because the $r_{k}$ decay at least geometrically), and
  \begin{equation}\label{eq:111}|y_{k+1}(y) - y_{k}(y)| \leq |I_{k}| = r_{k},\end{equation}
  where $I_{k} \in \calI_{k}$ is the unique interval containing both $y_{k+1}(y)$ and $y_{k}(y)$.
  Thus, the first series appearing in the definition of $\g$ converges.

  For the second series, by property \nref{III} of the Carleson series and \eqref{eq:111}, the integrand is bounded in absolute value by
\begin{equation}\label{form51} \sum_{k = 0}^{\infty} |\Delta_{k}(t)| \leq C_{\infty} \quad \text{for $\mu$ a.e. } t\in [0,1].\end{equation}
Thus $\g$ is well defined.

To prove \eqref{displayedInequality}, let $x'\in [0,1]$ with $x'\geq x$ and note that from the definition of $\g$, \eqref{eq:111} and \eqref{form51},
  \begin{equation}\label{eq:222}|\g(x, y) -\g(x',y)| = \left|\int_{x}^{x'} \sum_{k = 0}^{\infty} \frac{y_{k+1}(y) - y_{k}(y)}{r_{k}} \Delta_{k}(t) \, d\mu(t) \right| \leq C_{\infty} \mu([x,x']).\end{equation}
  Also, observe that the equation $\mu([x,x']) = f(x') - f(x)$ is immediate from the definition $f(t) = \mu([0,t])$. The claim follows.
  
Finally, to see that $g$ is continuous, let $y'\in E$.
Then by \eqref{eq:222}, \eqref{form51} and the triangle inequality,
\begin{align*}|\g(x,y)-\g(x',y')| &\leq |g(0,y)-g(0,y')| + C_{\infty}\mu([x,x'])\\
                                &\leq_{D, C_{\infty}} \mu([0,r_{n}]) + \mu([x,x'])
\end{align*}
for $n\in \N$ maximal with $y_{n}(y)=y_{n}(y')$ (if $y=y'$ then the first term vanishes).
In particular, if $(x,y')\to (x',y)$ then $n\to \infty$ and so $\mu([0,r_{n}]),\mu([x,x']) \to 0$.  The continuity of $\g$ follows. \end{proof}

\section{The relationship between $\g$ and $\mu$}
In this section, we find upper and lower bounds for $|\g(x) - \g(b)|$ in terms of the measure $\mu([x_{1},x_{1} + |x - b|))$, for $x,b \in [0,1] \times E$.  This will allow us to show that $F$ is a quasisymmetry and that it reduces the dimension of measures of the form $\calL \times \nu$ by about as much as $f$ reduces the dimension of $\calL$.

\begin{notation}\label{notationForTehcnicalLemmas}\label{technical-lemmas} For this section, we fix $x = (x_{1},x_{2}), b = (b_{1},b_{2}) \in [0,1) \times E$, with $x_{2} \neq b_{2}$. For each $k\in \N$ let 
\begin{displaymath} I_{k} := [d_{k}, d_{k}+r_{k}) \in \calS_{k} \end{displaymath}
be the unique interval containing $x_{1}$.
Finally, let $n\geq 0$ be the greatest integer for which $y_{n}(x_{2})=y_{n}(b_{2})$, which exists because $x_{2} \neq b_{2}$. In other words, $x_{2},b_{2}$ share a common interval in $\calI_{n}$, but lie in different intervals of $\calI_{n + 1}$. This implies that
\begin{equation}\label{form64} s_{n + 1} \leq |x_{2} - b_{2}| \leq r_{n}. \end{equation}
\end{notation}

In order to prove estimates for $|g(x) - g(b)|$, we start by observing that
\begin{displaymath} g(x) - g(b) = [g(x) - g(x_{1},b_{2})] + [g(x_{1},b_{2}) - g(b)]. \end{displaymath} 
In this subsection, we seek upper and lower bounds for the first term, $g(x) - g(x_{1},b_{2})$. Note that
\begin{displaymath} \g(x) - \g(x_{1},b_{2}) = \sum_{k = n}^{\infty} \frac{\delta_k}{r_{k}} \left( \mu([0,r_{k}))  + \int_{0}^{x_{1}} \Delta_{k}(t) \, d\mu(t) \right), \end{displaymath}
where $\delta_{k}=(y_{k+1}(x_{2}) - y_{k}(x_{2}))-(y_{k+1}(b_{2}) - y_{k}(b_{2}))$. The terms for $k < n$ simply cancel, recalling the definition of $g$ from Definition \ref{g-definition}, by the definition of $n$.  As for the remaining terms, the term with $k = n$ is the "main term", and the terms with $k > n$ are "errors", which are further divided into two categories: those with $n<k \leq n_{\alpha}$ (possibly none) and those given by $k> \max\{n, n_{\alpha}\}$. For the first category, we typically get -- and need -- better estimates. 

In our first lemma, we significantly use properties \nref{I}--\nref{II} of the Carleson series.
\begin{lemma}\label{use-of-carleson-II}
  For any $k\in \N$,
  \[ \min\{\mu(I_{k}), \mu(I_{k}^{r})\} \leq \left| \mu([0,r_{k})) + \int_{0}^{x_{1}} \Delta_{k}(t) \, d\mu(t) \right| \leq \begin{cases}\mu(I_{k}) & k \leq n_{\alpha}\\
    2D\mu(I_{k}) & \text{otherwise.}\end{cases}\]
\end{lemma}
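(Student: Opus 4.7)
The plan is to evaluate $\mu([0,r_{k})) + \int_{0}^{x_{1}} \Delta_{k}(t)\,d\mu(t)$ by reducing it, via a telescoping computation over complete intervals of $\calS_{k}$, to $\mu(I_{k})$ plus an ``interior'' correction on $I_{k}$, and then to control that correction by the size and sign information supplied by properties \nref{I} and \nref{II} of the Carleson series.

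First I would split the integral as $\int_{0}^{x_{1}} = \int_{0}^{d_{k}} + \int_{d_{k}}^{x_{1}}$, where $[d_{k},x_{1}) \subseteq I_{k}$. The interval $[0,d_{k})$ is partitioned by the intervals of $\calS_{k}$ lying strictly to the left of $I_{k}$; listing the elements of $\calS_{k}$ in $[0,1)$ left-to-right as $J_{0} = [0,r_{k}), J_{1}, J_{2}, \ldots$, one has $J_{j+1} = J_{j}^{r}$, and if $I_{k} = J_{i}$, then property \nref{I} together with the definition $a_{J_{j}}\mu(J_{j}) = \mu(J_{j}^{r}) - \mu(J_{j})$ gives
\begin{displaymath} \int_{0}^{d_{k}} \Delta_{k}\,d\mu = \sum_{j=0}^{i-1} \bigl[ \mu(J_{j+1}) - \mu(J_{j}) \bigr] = \mu(I_{k}) - \mu([0,r_{k})). \end{displaymath}
Substituting back,
\begin{displaymath} \mu([0,r_{k})) + \int_{0}^{x_{1}} \Delta_{k}\,d\mu = \mu(I_{k}) + \int_{d_{k}}^{x_{1}} \Delta_{k}\,d\mu, \end{displaymath}
and the task reduces to estimating the residual integral inside $I_{k}$.

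Next I would split into cases on $k$. If $k \leq n_{\alpha}$, then $I_{k}$ and $I_{k}^{r}$ are ternary intervals of length $r_{k} \geq r_{n_{\alpha}}$, so \nref{G0} forces $\mu(I_{k}) = r_{k} = \mu(I_{k}^{r})$; hence $a_{I_{k}} = 0$ and by property \nref{II} $\Delta_{k} \equiv 0$ on $I_{k}$. The expression equals $\mu(I_{k})$ exactly, which is also $\min\{\mu(I_{k}), \mu(I_{k}^{r})\}$, and both bounds hold with equality. If $k > n_{\alpha}$, property \nref{II} asserts that $\Delta_{k}$ has constant sign $\sgn(a_{I_{k}})$ on $I_{k}$; consequently
\begin{displaymath} \int_{d_{k}}^{x_{1}} \Delta_{k}\,d\mu \quad \text{lies between} \quad 0 \quad \text{and} \quad a_{I_{k}}\mu(I_{k}) = \mu(I_{k}^{r}) - \mu(I_{k}). \end{displaymath}
Adding $\mu(I_{k})$ places the full expression in the closed interval $[\min\{\mu(I_{k}),\mu(I_{k}^{r})\},\, \max\{\mu(I_{k}),\mu(I_{k}^{r})\}]$, which is positive so the absolute value is harmless. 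The lower bound is then immediate, and the upper bound follows from doubling: $\max\{\mu(I_{k}),\mu(I_{k}^{r})\} \leq D\mu(I_{k}) \leq 2D\mu(I_{k})$.

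There is no serious obstacle; the only conceptually delicate step is recognising that the telescoping on $[0,d_{k})$ is precisely what \nref{I} was engineered to give, and that the sign alignment in \nref{II} ensures the residual $\int_{d_{k}}^{x_{1}} \Delta_{k}\,d\mu$ drags the value monotonically toward $\mu(I_{k}^{r})$ without ever overshooting past it in either direction. Those two facts give the bounds simultaneously.
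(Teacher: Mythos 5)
Your proof is correct and takes essentially the same approach as the paper: the telescoping computation you carry out is precisely the paper's identity \eqref{form23}, and the reduction to $\mu(I_{k}) + \int_{d_{k}}^{x_{1}}\Delta_{k}\,d\mu$ combined with sign-monotonicity from property \nref{II} is the same core argument. The only cosmetic difference is that for $k > n_{\alpha}$ you extract both bounds simultaneously by sandwiching the expression in the interval between $\mu(I_{k})$ and $\mu(I_{k}^{r})$, whereas the paper proves the lower bound via a sign case split and the upper bound separately via the triangle inequality (arriving at $\mu(I_{k}) + |a_{I_{k}}|\mu(I_{k})$), with both routes reaching the $2D\mu(I_{k})$ bound through doubling.
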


\begin{proof}
For any $k\in \N$, the definition of $\Delta_{k}$, namely \nref{I}, gives
\begin{displaymath} \int_{I} \Delta_{k}(t) \, d\mu(t) = a_{I}\mu(I) := \mu(I^{r}) - \mu(I)
  \quad I \in \calS_{k} = \calD_{r_{k}}, \end{displaymath}
  and so one finds that 
  \begin{equation}\label{form23} \mu([0,r_{k})) + \int_{0}^{d} \Delta_{k}(t) \, d\mu(t) = \mu([d,d + r_{k}))\end{equation}
  whenever $d \in [0,1)$ is the left endpoint of an interval in $\calS_{k}$.

For the first inequality, we use \eqref{form23} with $d=d_{k}$, the left endpoint of $I_{k} \ni x_{1}$, to get
\[\left|\mu([0,r_{k})) +\int_{0}^{x_{1}}\Delta_{k}(t)\, d\mu(t)\right| = \left|\mu(I_{k}) +\int_{d}^{x_{1}}\Delta_{k}(t)\, d\mu(t)\right|.\]
If $a_{I_{k}} \geq 0$ then $\Delta_{k}(t)\geq 0$ for $\mu$ a.e.\ $t\in I_{k}$ by property \nref{II} and so the right hand side is bounded below by $\mu(I_{k})$.
On the other hand, if $a_{I_{k}}<0$, then by the same reasoning $\Delta_{k}(t) \leq 0$ on $I_{k}$, so the right hand side is smallest when $x_{1}= d_{k}+r_{k}$, and is consequently bounded below by $\mu(I_{k}) + a_{I_{k}}\mu(I_{k}) = \mu(I_{k}^{r})$ (using also \nref{I}).
In either case,
\[\left|\mu([0,r_{k})) +\int_{0}^{x_{1}}\Delta_{k}(t)\, d\mu(t)\right| \geq \min\{\mu(I_{k}), \mu(I_{k}^{r})\},\]
as required.

For the second inequality, we again use \eqref{form23} with $d_{k}$ to get
  \begin{align} & \left| \mu([0,r_{k})) + \int_{0}^{x_{1}} \Delta_{k}(t) \, d\mu(t) \right|\notag\\
  &\quad \leq \left| \mu([0,r_{k})) + \int_{0}^{d_{k}} \Delta_{k}(t) \, d\mu(t) \right| + \int_{I_{k}} |\Delta_{k}(t)| \, d\mu(t)\notag\\
  &\quad = \mu(I_{k}) + |a_{I_{k}}|\mu(I_{k}).\notag \end{align}
If $k \leq n_{\alpha}$, then $a_{I_{k}}\mu(I_{k}) = \mu(I_{k}^{r}) - \mu(I_{k}) = |I_{k}^{r}| - |I_{k}^{r}| = 0$ by \nref{G0}. Therefore
\[\left| \mu([0,r_{k})) + \int_{0}^{x_{1}} \Delta_{k}(t) \, d\mu(t) \right| \leq \begin{cases}\mu(I_{k}) & k \leq n_{\alpha}\\
    2D\mu(I_{k}) & \text{otherwise,}\end{cases}\]
establishing the second inequality. \end{proof}

The next lemma gives a further estimate for the numbers appearing on the right hand side of the bound in the previous lemma. This is the first place where the choice of $n_{\alpha}$ becomes significant: it needs to be so large that the errors corresponding to $k > \max\{n,n_{\alpha}\}$ admit an upper bound depending only on $n$, not $D$, as in the first estimate below:

\begin{lemma}\label{G0-control}
  For any $D \geq 1$, if $n_{\alpha}$ is sufficiently large then
  \[\sum_{n_{\alpha} \geq k > n}2\mu(I_{k}) + \sum_{k>n_{\alpha}} 4D\mu(I_{k})\leq \frac{3}{4}s_{n+1}\]
  whenever $0 \leq n < n_{\alpha}$.
  Moreover, there exists a $D'$ depending only upon $D$ such that
  \[\sum_{k > n}\mu(I_{k}) \leq D'\mu(I_{n+1})\]
  for any $n\in\N$.
\end{lemma}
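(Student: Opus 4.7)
The plan is to prove the two bounds by direct computation, exploiting the super-exponential decay of the side-lengths $r_{k} = 3^{-2^{k+m_{s}}}$ together with property \nref{G0} (for $k \leq n_{\alpha}$) and \eqref{tau} (for $k > n_{\alpha}$).

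For the ``moreover'' bound, I would observe that $I_{n+1} \supset I_{n+2} \supset \cdots$ is a nested chain of ternary intervals, so \eqref{tau} gives $\mu(I_{k}) \leq \tau^{2^{k+m_{s}}-2^{n+1+m_{s}}}\mu(I_{n+1})$ for every $k \geq n+1$. Since the exponent equals $2^{n+1+m_{s}}(2^{k-n-1}-1) \geq 2^{k-n-1}-1$, summing on $k$ yields
\[\sum_{k > n}\mu(I_{k}) \leq \mu(I_{n+1})\sum_{j=0}^{\infty}\tau^{2^{j}-1} =: D'\,\mu(I_{n+1}),\]
where the series converges because $\tau < 1$. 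The constant $D'$ depends only on $\tau$, and hence only on $D$ (through $\alpha$), which settles the second assertion.

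For the first bound, I would split the sum at $n_{\alpha}$. When $n < k \leq n_{\alpha}$, property \nref{G0} gives $\mu(I_{k}) = r_{k}$ because $I_{k}$ is a ternary interval with $r_{k} \geq r_{n_{\alpha}}$. The ratios $r_{k+1}/r_{k} = 3^{-2^{k+m_{s}}}$ decay so rapidly that $\sum_{k > n} r_{k} \leq (1 + \eta_{m_{s}}) r_{n+1}$, with $\eta_{m_{s}} \to 0$ as $m_{s} \to \infty$. Combined with $r_{n+1} \leq 3^{-(n+1)} s_{n+1} \leq s_{n+1}/3$ from \eqref{form50} (arranged in the construction by taking $m_{s}$ large), this gives
\[\sum_{n < k \leq n_{\alpha}} 2\mu(I_{k}) \leq \tfrac{2}{3}(1+\eta_{m_{s}}) s_{n+1},\]
which is strictly below $3 s_{n+1}/4$ provided $m_{s}$ was taken large enough that $\eta_{m_{s}} < 1/8$.

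For the range $k > n_{\alpha}$, I would apply \eqref{tau} between $I_{n_{\alpha}}$ and $I_{k}$ and use \nref{G0} to write $\mu(I_{n_{\alpha}}) = r_{n_{\alpha}} \leq r_{n+1}$. The same calculation as for the ``moreover'' bound then yields
\[\sum_{k > n_{\alpha}} \mu(I_{k}) \leq r_{n+1}\sum_{j=1}^{\infty}\tau^{(2^{j}-1)\cdot 2^{n_{\alpha}+m_{s}}} \leq C(\tau)\,\tau^{2^{n_{\alpha}+m_{s}}} r_{n+1},\]
and the right-hand side can be made arbitrarily small by taking $n_{\alpha}$ large (with $D$, hence $\tau$, fixed). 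Inserting $r_{n+1} \leq s_{n+1}/3$ and choosing $n_{\alpha}$ so large that $\tfrac{4D}{3}\cdot C(\tau)\,\tau^{2^{n_{\alpha}+m_{s}}}$ fits inside the residual slack between $\tfrac{2}{3}(1+\eta_{m_{s}})$ and $3/4$ finishes the argument. The only (mild) difficulty is the numerical bookkeeping: the first sum already consumes close to $2s_{n+1}/3$, leaving narrow room, so one must (i) arrange $m_{s}$ large enough to push $\eta_{m_{s}}$ well below $1/8$ (permitted by the freedom in the construction of $E$), and (ii) arrange $n_{\alpha}$ large enough to crush the super-exponentially small tail factor below what remains of the $3s_{n+1}/4$ budget.
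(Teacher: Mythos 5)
Your proof follows essentially the same approach as the paper's: both split the sum at $n_{\alpha}$, use \nref{G0} to write $\mu(I_{k}) = r_{k}$ for $n < k \leq n_{\alpha}$, invoke \eqref{tau} to get super-geometric decay of $\mu(I_{k})$ for $k > n_{\alpha}$ (with $\mu(I_{n_{\alpha}}) = r_{n_{\alpha}} \leq r_{n+1}$), and convert to $s_{n+1}$ via \eqref{form50}. One small remark: the condition $\eta_{m_{s}} < 1/8$, which you present as a fresh requirement on the construction of $E$, is in fact automatic from \eqref{form50} (which already forces $r_{k+1}/r_{k} < 1/30$, so that $\eta_{m_{s}} \leq 1/29 < 1/8$ for the already-constructed $E$), so there is no need to appeal to further freedom in choosing $m_{s}$ -- this is consistent with the lemma's statement, which lets only $n_{\alpha}$ be adjusted.
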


\begin{proof}
  We prove the statements in reverse order.
  Recall the definition of $\tau<1$ (which depends only upon $D$) from \eqref{tau} to obtain
  \[\mu(I_{k+1}) \leq \tau^{\log_{3} (r_{k}/r_{k+1})} \mu(I_{k}) = \left(\frac{r_{k+1}}{r_{k}}\right)^{D'} \mu(I_{k}) \leq 3^{-kD'} \mu(I_{k}),\]
  for some $D'>0$ depending only upon $D$.
  For the final inequality, we used \eqref{form50}.
  Notice that the first inequality implies that the $\mu(I_{k})$ decay at least at a geometric rate, and so we already obtain the final statement of the lemma.
  Moreover,
  \begin{equation}\label{77} \sum_{k>n} \mu(I_{k}) \leq D' \mu(I_{n+1})\leq D' 3^{-nD'} \mu(I_{n}).\end{equation}

  To obtain the first statement, suppose $n_{\alpha}$ is sufficiently large so that $DD' 3^{-D'n_{\alpha}}<1/29$.
  Then, for any $n < n_{\alpha}$, by
  property \nref{G0} and \eqref{77},
  \begin{align*} \sum_{n_{\alpha}\geq k > n}2\mu(I_{k}) + \sum_{k> n_{\alpha}}4D\mu(I_{k})
                                     &\leq \sum_{n_{\alpha} \geq k >n} 2r_{k} + 4DD' 3^{-D'n_{\alpha}}\mu(I_{n_{\alpha}})\\
    &\leq 60r_{n+1}/29 +DD' 3^{-D'n_{\alpha}}r_{n+1}
  \end{align*}
  since the $r_{k+1}<r_{k}/30$ and hence decay at least at a geometric rate. By the choice of $n_{\alpha}$, this final expression is bounded above by $61r_{n+1}/29$.
  Recalling \eqref{form50} completes the proof since
  \[61r_{n+1}/29\leq 61\cdot 3^{-(n+1)}s_{n+1}/29 \leq 3s_{n+1}/4.\] 
  \end{proof}

We are now in a position to estimate the difference $|\g(x)-\g(x_{1},b_{2})|$:
\begin{lemma}\label{vertical-big}
The following inequality is true:
  \begin{equation}\label{form61}
    \left|\g(x) - \g(x_{1}, b_{2}) \right| \lesssim_{D} |y_{n+1}(x_{2})-y_{n+1}(b_{2})| \Theta(I_{n}) + \mu(I_{n+1}). \end{equation}
  Further, provided $n_{\alpha}$ is so large that the conclusion of Lemma \ref{G0-control} holds, then
\begin{equation}\label{form62} |\g(x)-\g(x_{1},b_{2})| \geq |y_{n+1}(x_{2})-y_{n+1}(b_{2})|\min\{\Theta(I_{n}),\Theta(I_{n}^{r})\} - \mathcal E(n+1),\end{equation}
  for
  \[\mathcal E(n+1)=\begin{cases}\frac{3}{4}s_{n+1} & n<n_{\alpha}\\ D''\mu(I_{n+1}) &n\geq n_{\alpha}\end{cases}\]  
  and $D''$ a constant depending only upon $D$.
\end{lemma}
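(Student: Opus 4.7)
The plan is to decompose the explicit series representing $\g(x) - \g(x_1, b_2)$ from the excerpt into a \emph{principal term} at level $k = n$ and a \emph{tail} over $k > n$, and then to apply Lemmas \ref{use-of-carleson-II} and \ref{G0-control} termwise to each piece. Writing $Q_k := \mu([0, r_k)) + \int_0^{x_1} \Delta_k(t)\, d\mu(t)$, the identity recorded above reads
\[
\g(x) - \g(x_1, b_2) \;=\; \sum_{k = n}^{\infty} \frac{\delta_k}{r_k}\, Q_k.
\]
The coefficient at the principal index is $\delta_n = y_{n+1}(x_2) - y_{n+1}(b_2)$, because $y_n(x_2) = y_n(b_2)$; for $k > n$, the bound \eqref{eq:111} applied to both $x_2$ and $b_2$ gives $|\delta_k| \leq 2 r_k$, i.e.\ $|\delta_k|/r_k \leq 2$.

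For the upper bound \eqref{form61}, I would insert the estimate $|Q_k| \leq 2 D \mu(I_k)$ from Lemma \ref{use-of-carleson-II} into the triangle inequality, obtaining
\[
|\g(x) - \g(x_1, b_2)| \;\leq\; 2 D\, |y_{n+1}(x_2) - y_{n+1}(b_2)|\, \Theta(I_n) \;+\; 4 D \sum_{k > n} \mu(I_k),
\]
using $|I_n| = r_n$ so that $\mu(I_n)/r_n = \Theta(I_n)$. The second clause of Lemma \ref{G0-control} then controls the tail by $4 D D'\, \mu(I_{n+1})$, which gives \eqref{form61} up to an absolute constant depending only on $D$.

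For the lower bound \eqref{form62} I would apply the reverse triangle inequality. The sharp estimate $|Q_n| \geq \min\{\mu(I_n), \mu(I_n^r)\}$ of Lemma \ref{use-of-carleson-II}, together with $|I_n| = |I_n^r| = r_n$, produces the principal contribution $|y_{n+1}(x_2) - y_{n+1}(b_2)|\, \min\{\Theta(I_n), \Theta(I_n^r)\}$. The main bookkeeping issue, and the reason for the case split in $\mathcal E(n+1)$, is that the tail must be estimated according to whether $k \leq n_\alpha$ or $k > n_\alpha$: by Lemma \ref{use-of-carleson-II}, the sharper bound $|Q_k| \leq \mu(I_k)$ holds in the former regime, while only $|Q_k| \leq 2D\mu(I_k)$ is available in the latter. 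Combined with $|\delta_k|/r_k \leq 2$, the tail is controlled by
\[
\sum_{n < k \leq n_\alpha} 2\mu(I_k) \;+\; \sum_{k > n_\alpha} 4D\mu(I_k),
\]
which is precisely the quantity estimated in Lemma \ref{G0-control}: when $n < n_\alpha$ it is $\leq \tfrac{3}{4} s_{n+1}$, and when $n \geq n_\alpha$ the first sum is empty and the second is dominated by $4 D D'\, \mu(I_{n+1}) =: D''\, \mu(I_{n+1})$. No substantial obstacle arises; the only mildly delicate point is aligning the two tail regimes with the correct branch of $\mathcal E(n+1)$, and the bound on the $k=n$ term in Lemma~\ref{use-of-carleson-II} being stated symmetrically in $I_n$ and $I_n^r$ is precisely what is needed to produce the $\min\{\Theta(I_n), \Theta(I_n^r)\}$ appearing in \eqref{form62}.
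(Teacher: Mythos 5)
Your proof is correct and follows the same route as the paper: split the series at $k=n$, invoke the second inequality of Lemma \ref{use-of-carleson-II} termwise for the upper bound with the tail from the second part of Lemma \ref{G0-control}, and the first inequality of Lemma \ref{use-of-carleson-II} together with a reverse triangle inequality plus the two-regime tail bound from Lemma \ref{G0-control} for the lower bound. The only cosmetic difference is that you write the tail generically as $\sum_{n<k\leq n_\alpha}2\mu(I_k)+\sum_{k>n_\alpha}4D\mu(I_k)$, whereas the paper's second sum is indexed by $k>\max\{n,n_\alpha\}$ so that the two cases of $\mathcal{E}(n+1)$ read off directly; since you then correctly apply the second inequality of Lemma \ref{G0-control} starting from index $n$ when $n\geq n_\alpha$, this is a harmless notational shortcut.
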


\begin{proof}
Recall that
 \begin{equation}\label{eq1}\g(x) - \g(x_{1},b_{2}) = \sum_{k = n}^{\infty} \frac{\delta_k}{r_{k}} \left( \mu([0,r_{k}))  + \int_{0}^{x_{1}} \Delta_{k}(t) \, d\mu(t) \right), \end{equation}
 where $\delta_k =(y_{k+1}(x_{2}) - y_{k}(x_{2}))-(y_{k+1}(b_{2}) - y_{k}(b_{2}))$. To prove \eqref{form61}, we first use the second inequality of Lemma \ref{use-of-carleson-II} to obtain
\[|\g(x) - \g(x_{1}, b_{2})| \lesssim_{D} |\delta_{n}|\Theta(I_{n}) + \sum_{k > n} \frac{|\delta_{k}|}{r_{k}}\mu(I_{k}).\]
Since $\delta_{n}=y_{n+1}(x_{2})-y_{n+1}(b_{2})$, the first term is of the required form.
Further, notice that
\[\frac{|\delta_{k}|}{r_{k}}\leq \frac{|y_{k+1}(x_{2})-y_{k}(x_{2})|}{r_{k}} + \frac{|y_{k+1}(b_{2})-y_{k}(b_{2})|}{r_{k}} \leq 2, \qquad k \geq n. \]
Thus, an application of the second inequality from Lemma \ref{G0-control} gives \eqref{form61}.

The proof of \eqref{form62} is very similar, with the only difference  that we bound the $k=n$ term of \eqref{eq1} from below using the first inequality of Lemma \ref{use-of-carleson-II} and are more careful in treating the error terms.
Indeed, by applying Lemma \ref{use-of-carleson-II} to \eqref{eq1}, using the triangle inequality and the fact that $|\delta_{k}|/r_{k}\leq 2$, we get
\[|\g(x)-\g(x_{1},b_{2})| \geq |\delta_{n}| \cdot \min\{\Theta(I_{n}),\Theta(I_{n}^{r})\}-\sum_{n_{\alpha}\geq k >n}2\mu(I_{k}) -\sum_{k>\max\{n,n_{\alpha}\}}4D\mu(I_{k}).\]
There are now two cases to consider.
If $n<n_{\alpha}$ then we apply the first inequality of Lemma \ref{G0-control} to deduce that the sum of the terms with a negative sign is less than $3s_{n+1}/4$, as required.
If $n \geq n_{\alpha}$ then the second term of the right hand side equals 0 and we apply the second inequality of Lemma \ref{G0-control} to deduce \eqref{form62}.
\end{proof}

The previous results easily give an upper bound of $|\g(x)-\g(x_{2},b_{2})|$:
\begin{lemma}\label{upperbound1}
 We have
  \[|\g(x)-\g(x_1,b_{2})| \lesssim_{D} \mu([x_1, x_1+ |x-b|)).\]
\end{lemma}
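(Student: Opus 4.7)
The starting point is the upper bound from Lemma \ref{vertical-big},
\[|\g(x) - \g(x_{1}, b_{2})| \lesssim_{D} |y_{n+1}(x_{2})-y_{n+1}(b_{2})|\Theta(I_{n}) + \mu(I_{n+1}).\]
Writing $\rho := |x-b|$ and recalling that $\rho \geq |x_{2}-b_{2}| \geq s_{n+1}$, the plan is to dominate each of the two summands by a $D$-dependent constant multiple of $\mu([x_{1}, x_{1}+\rho))$. The argument splits into two cases, $\rho \leq r_{n}$ and $\rho > r_{n}$, and in both cases property \nref{G1} does essentially all the work.

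For $\rho \leq r_{n}$, the interval $J := [x_{1}, x_{1}+\rho)$ has length in $[s_{n+1}, r_{n}]$ and sits inside $3I_{n}$, so \nref{G1} immediately gives $\mu(J) \sim_{D} \rho\Theta(I_{n})$. Combined with the elementary bound $|y_{n+1}(x_{2})-y_{n+1}(b_{2})| \leq |x_{2}-b_{2}| + 2r_{n+1} \lesssim \rho$ (which follows from $r_{n+1} \ll s_{n+1} \leq |x_{2}-b_{2}|$ via \eqref{form50}), this dominates the first summand by $\mu(J)$. For the error $\mu(I_{n+1})$, I will choose the ternary ancestor $J' \subset I_{n}$ of $I_{n+1}$ whose length $3^{-k}$ is the smallest power of $3$ at least $s_{n+1}$, so that $s_{n+1} \leq 3^{-k} < 3s_{n+1} < r_{n}$ (with $3^{-k} > r_{n+1}$ guaranteeing $I_{n+1} \subsetneq J' \subset I_{n}$). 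The "moreover" part of \nref{G1}, applied with parent $I_{n}$, then yields $\mu(I_{n+1}) \leq \mu(J') = 3^{-k}\Theta(I_{n}) < 3s_{n+1}\Theta(I_{n}) \leq 3\rho\Theta(I_{n}) \sim_{D} \mu(J)$.

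For $\rho > r_{n}$, the cruder bound $|y_{n+1}(x_{2})-y_{n+1}(b_{2})| \leq r_{n}$ (both midpoints lie in the common $\calI_{n}$-interval of length $r_{n}$) collapses both summands to a constant times $\mu(I_{n})$. On the other side, $\mu([x_{1}, x_{1}+\rho)) \geq \mu([x_{1}, x_{1}+r_{n}))$, and the length-$r_{n}$ window $[x_{1}, x_{1}+r_{n})$ always contains a full ternary sub-interval $J$ of length $r_{n}/3$, which is a child of either $I_{n}$ or its right neighbour $I_{n}^{r}$; by the "moreover" of \nref{G1} (valid since $s_{n+1} < r_{n}/10 < r_{n}/3$) together with the doubling comparison $\mu(I_{n}) \sim_{D} \mu(I_{n}^{r})$, one gets $\mu(J) \gtrsim_{D} \mu(I_{n})$, closing this case. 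The main delicate point throughout is handling $\mu(I_{n+1})$ in the small-scale case: $I_{n+1}$ lives at scale $r_{n+1} \ll s_{n+1}$, below the range in which \nref{G1} gives density-flatness, so one must first climb to the ternary ancestor $J'$ of length $\sim s_{n+1}$ before \nref{G1} becomes applicable and the comparison $\mu(I_{n+1}) \lesssim_{D} \rho\Theta(I_{n})$ can be made.
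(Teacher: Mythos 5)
Your argument is correct, but it follows a slightly more circuitous path than the paper's proof of Lemma \ref{upperbound1}. Both start from the upper bound in Lemma \ref{vertical-big} and both use \nref{G1} to relate $\Theta(I_{n})$ to the density at $x_{1}$. The paper, however, works with $|x_{2}-b_{2}|$ throughout rather than $\rho = |x-b|$: since \eqref{form64} guarantees $s_{n+1}\leq|x_{2}-b_{2}|\leq r_{n}$ unconditionally, \nref{G1} applies directly with no case split, and the passage to $|x-b|$ happens only in the very last line via monotonicity of $t\mapsto\mu([x_{1},x_{1}+t))$. Your case split $\rho\lessgtr r_{n}$ is thus avoidable, though it does no harm. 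The other divergence is the handling of the error term $\mu(I_{n+1})$: the paper simply notes $\mu(I_{n+1})\lesssim_{D}\mu([x_{1},x_{1}+r_{n+1}))$ by doubling (the two intervals have the same length and overlap at $x_{1}$) and then uses $r_{n+1}\leq s_{n+1}\leq|x_{2}-b_{2}|$ plus monotonicity, whereas you climb to the ternary ancestor $J'$ of $I_{n+1}$ at scale $\approx s_{n+1}$ and invoke the ``moreover'' clause of \nref{G1}. Your route is correct (and the observation that $I_{n+1}\subsetneq J'\subset I_{n}$ by nestedness of ternary intervals is exactly the point one needs to verify), but the doubling argument is lighter. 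In short: same skeleton, correct details, with two elaborations you could strip out by mimicking the paper's trick of running the whole estimate in terms of $|x_{2}-b_{2}|$.
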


\begin{proof}
Note that since $n\geq 0$ is maximal with $y_n(x_{2})= y_n(b_{2})$, the construction of $E$ implies that $s_{n + 1} \leq |x_{2}-b_{2}| \leq r_n$, and $|x_{2} - b_{2}| \sim |y_{n + 1}(x_{2}) - y_{n + 1}(b_{2})|$. 
  We apply Lemma \ref{vertical-big} to obtain
  \begin{align*} |\g(x) - \g(x_{1},b_{2})| 
    &\lesssim_{D} |y_{n+1}(x_{2}) - y_{n+1}(b_{2})|\Theta(I_n) +  \mu(I_{n+1})\\
    &\lesssim_{D} |x_{2} - b_{2}| \Theta(I_{n}) + \mu([x_{1},x_{1} + r_{n+1})), \end{align*}
by the doubling property of $\mu$.
  
  Since $|x_{2}-b_{2}|\geq s_{n + 1} \geq r_{n+1}$, the second term is bounded above by $\mu([x_{1},x_{1} + |x_{2}-b_{2}|)$.
  Since $s_{n + 1} \leq |x_{2}-b_{2}| \leq r_n$, recalling \eqref{form64}, we apply \nref{G1} to get $\Theta (I_{n}) \sim_{D} \Theta([x_{1},x_{1} + |x_{2}-b_{2}|))$.  Therefore,
  \begin{align*} |\g(x) - \g(x_{1},b_{2})| & \lesssim_{D} |x_{2} - b_{2}|\Theta([x_{1},x_{1} + |x_{2} - b_{2}|)) + \mu([x_{1},x_{1} + |x_{2} - b_{2}|))\\
  & \leq 2\mu([x_{1},x_{1} + |x - b|)), \end{align*}
  as required.  
\end{proof}

The previous lemma has the following corollary for the difference $|g(x) - g(b)|$:

\begin{lemma}\label{fcomparable} The following estimate holds:
   \[|\g(x) - \g(b)| \lesssim_{D,C_{\infty}} \mu([x_{1},x_{1}+|x-b|)).\]
\end{lemma}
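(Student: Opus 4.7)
The plan is to decompose the difference as
\begin{equation*}
\g(x) - \g(b) = [\g(x) - \g(x_{1},b_{2})] + [\g(x_{1},b_{2}) - \g(b)],
\end{equation*}
and bound each bracket separately. The first bracket is already handled by Lemma \ref{upperbound1}: it is $\lesssim_D \mu([x_1, x_1 + |x-b|))$, which is within the allowed error. So the only remaining work concerns the second bracket.

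For the second bracket, note that $g(x_1, b_2)$ and $g(b_1, b_2)$ share the same second coordinate $b_2 \in E$, so Lemma \ref{linfinity-bound} applies directly and yields
\begin{equation*}
|\g(x_1, b_2) - \g(b_1, b_2)| \leq C_\infty \, \mu\bigl([\min\{x_1,b_1\}, \max\{x_1,b_1\}]\bigr).
\end{equation*}
Since $|x_1 - b_1| \leq |x-b|$, the integration interval above has length at most $|x - b|$.

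To convert this to a bound of the correct shape $\mu([x_1, x_1 + |x-b|))$, consider two cases. If $b_1 \geq x_1$, then $[\min\{x_1,b_1\}, \max\{x_1,b_1\}] = [x_1, b_1] \subset [x_1, x_1 + |x-b|)$, so monotonicity of $\mu$ gives the desired bound immediately. If $b_1 < x_1$, then the relevant interval is $[b_1, x_1] \subset [x_1 - |x-b|, x_1]$, which lies to the \emph{left} of $x_1$. Here we invoke the doubling property from Lemma \ref{uniform-doubling}: since $[x_1 - |x-b|, x_1]$ and $[x_1, x_1 + |x-b|)$ are adjacent intervals of equal length, we have $\mu([x_1 - |x-b|, x_1]) \leq D \cdot \mu([x_1, x_1 + |x-b|))$.

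Combining the two cases with the estimate from Lemma \ref{upperbound1} yields the claimed bound with implicit constant depending only on $D$ and $C_\infty$. There is no real obstacle here; the argument is essentially a triangle inequality plus one application of doubling to move the estimate onto the right-hand side of $x_1$.
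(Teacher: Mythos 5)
Your proof is correct and follows the same route as the paper: decompose $\g(x)-\g(b)$ across the intermediate point $(x_1,b_2)$, bound the first term by Lemma \ref{upperbound1} and the second by Lemma \ref{linfinity-bound}, then combine. The only difference is that you make explicit the one application of doubling needed when $b_1 < x_1$ to move the interval to the right of $x_1$, a step the paper silently absorbs into the $\lesssim_{D,C_\infty}$ notation.
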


\begin{proof} We apply Lemmas \ref{upperbound1} and \ref{linfinity-bound}:
  \begin{align}
   \label{form63} |\g(x)-\g(b)| &\leq |\g(x)-\g(x_1,b_2)| + |\g(x_1,b_2) - \g(b)| \\
                    &\lesssim_{D,C_{\infty}} \mu([x_{1}, x_{1}+|x-b|))+\mu([x_{1}, x_{1}+|x_{1}-b_{1}|)),\notag\\
                    &\leq 2\mu([x_{1},x_{1}+|x-b|)).\notag
  \end{align}
\end{proof}
\begin{remark} Note that the estimate above remains valid for $x,b \in [0,1] \times E$, by continuity ($\mu$ is clearly non-atomic), even though our standing assumption is $x,b \in [0,1) \times E$. Also, the estimate remains valid, if $x_{2} = b_{2}$; then the first term on the right hand side of \eqref{form63} simply vanishes, and the estimate follows from Lemma \ref{linfinity-bound}. The same remark also applies without change to the next lemma. \end{remark}

Finally, we prove a lower bound, which matches the upper bound from the previous lemma. This is only true, if $|x_{1} - b_{1}| \ll |x - b|$. Here, we again need $n_{\alpha}$ to be large: otherwise we could only prove the estimate for $|x - b|$ sufficiently small, depending on the doubling constant $D$ and $C_{\infty}$, and this would be fatal for eventually proving quasisymmetry on "large scales". 

\begin{lemma}\label{lemma-f2-growth-rate}
  Let $\rho_{D} \in (0,\tfrac{1}{2})$ be sufficiently small (depending only on $D,C_{\infty}$) and $n_{\alpha}$ sufficiently large (depending only upon $D$).  If $|x_1-b_1| \leq \rho_{D} |x-b|$, then
  \[|\g(x)-\g(b)| \gtrsim_{D} \mu([x_{1}, x_{1} + |x-b|)).\]
\end{lemma}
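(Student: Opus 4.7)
The strategy is via the triangle inequality
\begin{displaymath}
|\g(x) - \g(b)| \geq |\g(x) - \g(x_1, b_2)| - |\g(x_1, b_2) - \g(b)|,
\end{displaymath}
bounding the first term below by Lemma \ref{vertical-big} \eqref{form62} and the second above by Lemma \ref{linfinity-bound}.

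For the main term $|y_{n+1}(x_2) - y_{n+1}(b_2)| \min\{\Theta(I_n), \Theta(I_n^r)\}$ appearing in \eqref{form62}, I would use doubling to write $\min\{\Theta(I_n), \Theta(I_n^r)\} \gtrsim_D \Theta(I_n)$; note $|y_{n+1}(x_2) - y_{n+1}(b_2)| \geq |x_2 - b_2| - r_{n+1}$ with $r_{n+1} < s_{n+1}/3^{n+1} \ll |x_2 - b_2|$; use $|x_2 - b_2| \geq (1-\rho_D)|x-b|$ when $\rho_D \leq \tfrac{1}{2}$; and invoke \nref{G1} on $[x_1, x_1+|x-b|) \subset 3 I_n$ (whose length lies in $[s_{n+1}, 2r_n]$) to obtain $|x-b|\Theta(I_n) \sim_D \mu([x_1, x_1+|x-b|))$. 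Combined, the main term is at least $c_D \mu([x_1, x_1+|x-b|))$ for some $c_D > 0$. The error $\calE(n+1)$ is handled in two cases. If $n < n_\alpha$, then $\calE(n+1) = 3s_{n+1}/4$ and \nref{G0} gives $\Theta(I_n) = 1$; the stronger lower bound $|y_{n+1}(x_2) - y_{n+1}(b_2)| \geq s_{n+1} + r_{n+1}$ then gives main $- \calE(n+1) \geq s_{n+1}/4 + r_{n+1}$, itself $\gtrsim \mu([x_1, x_1+|x-b|))$ (using $\Theta \equiv 1$). If $n \geq n_\alpha$, counting the $(1-s)2^{n+m_s} - n$ compression scales in $\calJ$ that lie between $r_n$ and $r_{n+1}$ via \eqref{defCalJ} yields $\mu(I_{n+1}) \leq ((1+2\alpha)/3)^{(1-s)2^{n+m_s}} \mu(I_n)/m_{n+1}$; since $\alpha < 1$ and $\mu([x_1, x_1+s_{n+1})) \gtrsim_D \mu(I_n)/m_{n+1}$, choosing $n_\alpha$ large enough (depending only on $D$) makes $\calE(n+1) = D''\mu(I_{n+1})$ a negligible fraction of the main term.

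For the horizontal error $|\g(x_1, b_2) - \g(b)| \leq C_\infty \mu([\min(x_1, b_1), \max(x_1, b_1)])$, I would split by the size of $|x_1 - b_1|$. If $|x_1 - b_1| \geq s_{n+1}$, \nref{G1} and doubling give $\mu([\min, \max]) \lesssim_D \rho_D \mu([x_1, x_1+|x-b|))$ directly. The main obstacle is the case $|x_1 - b_1| < s_{n+1}$ combined with $|x-b| \sim s_{n+1}$: here the naive bound via \nref{G1} only yields $\mu([\min, \max]) \lesssim_D s_{n+1}\Theta(I_n) \sim_D \mu([x_1, x_1+|x-b|))$, with no smallness from $\rho_D$. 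The resolution exploits \eqref{defCalJ} and \eqref{form38} to count compression scales precisely: for the smallest ternary interval $J \supset [\min, \max]$, of length $\sim \rho_D s_{n+1}$ lying inside the compression range $(3^n r_{n+1}, s_{n+1}]$, exactly $\log_3(1/\rho_D)$ of the levels between $r_n$ and $|J|$ belong to $\calJ$ while the remaining $\log_3 m_{n+1}$ levels contribute factors of $1/3$; hence
\begin{displaymath}
\mu(J) \leq \tau^{\log_3(1/\rho_D)} (1/3)^{\log_3 m_{n+1}} \mu(I_n) = \rho_D^{D'}\,\mu(I_n)/m_{n+1},
\end{displaymath}
with $D' = \log_3(1/\tau) > 0$, so that $\mu([\min, \max]) \lesssim_D \rho_D^{D'}\mu([x_1, x_1+|x-b|))$. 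In either sub-case, taking $\rho_D$ small enough (depending only on $D$ and $C_\infty$) forces the horizontal term below $c_D/2 \cdot \mu([x_1, x_1+|x-b|))$, so that main minus both errors is $\gtrsim_D \mu([x_1, x_1+|x-b|))$, as required.
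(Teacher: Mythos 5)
Your proposal follows the same overall strategy as the paper's proof: the triangle inequality split into the vertical term $|\g(x)-\g(x_1,b_2)|$ (bounded below via Lemma~\ref{vertical-big}, cases $n<n_\alpha$ and $n\geq n_\alpha$) and the horizontal term $|\g(x_1,b_2)-\g(b)|$ (bounded above via Lemma~\ref{linfinity-bound}), with the hypothesis $|x_1-b_1|\leq\rho_D|x-b|$ used to make the horizontal term a small fraction of the main one. Where you diverge is mainly in the error estimates, which you make more elaborate than necessary. For the $n\geq n_\alpha$ error $D''\mu(I_{n+1})$, the paper just notes that $I_{n+1}$ sits at least $n$ ternary levels below an interval of length $|x_2-b_2|$ (because $3^{n+1}r_{n+1}<s_{n+1}\leq|x_2-b_2|$ by \eqref{form50}), so \eqref{tau} gives $\mu(I_{n+1})\lesssim_D\tau^n\mu([x_1,x_1+|x_2-b_2|))$, which is negligible once $n\geq n_\alpha$ is large; your explicit count of compression scales recovers a sharper version of this fact but isn't needed. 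For the horizontal term, the paper again applies \eqref{tau} directly: since $|x_1-b_1|\leq\rho_D|x-b|$, the ternary level gap is $\geq\log_3(1/\rho_D)$, giving $\mu([x_1,x_1+|x_1-b_1|))\lesssim_D\tau^{\log_3(1/\rho_D)}\mu([x_1,x_1+|x-b|))=\rho_D^{\log_3(1/\tau)}\mu([x_1,x_1+|x-b|))$ in all cases at once. Your two-case split (the \nref{G1}-based bound for $|x_1-b_1|\geq s_{n+1}$, the compression-scale count for $|x_1-b_1|<s_{n+1}$ with $|x-b|\sim s_{n+1}$) is correct where stated, but leaves the sub-case $|x_1-b_1|<s_{n+1}$ with $|x-b|\gg s_{n+1}$ implicit; your "resolution" argument does in fact cover it (the smallest ternary cover of $[\min,\max]$ is then even deeper, so $\mu$ shrinks more), but it would be cleaner to avoid the case distinction altogether. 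Both routes work; the paper's is shorter.
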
  

\begin{proof}  The definition of $n$ implies (recall \eqref{form64}) that
\begin{equation}\label{eq4}s_{n+1} \leq |y_{n+1}(x_{2}) - y_{n+1}(b_{2})| \leq r_{n} \text{ and } |y_{n+1}(x_{2})-y_{n+1}(b_{2})| \sim |x_{2}-b_{2}|, \end{equation}
where the second estimate also uses the fact that $s_{n + 1}$ is larger than $r_{n + 1}$.
From Lemma \ref{vertical-big}, we see that
\begin{equation}\label{calE-choice} |\g(x) - \g(x_{1},b_{2})| \geq \Theta(I_{n})|y_{n+1}(x_{2}) - y_{n+1}(b_{2})| - \mathcal E(n+1).\end{equation}
There are two cases to consider.  First, if $n<n_{\alpha}$ then $\Theta(I_{n}) = 1$ by \nref{G0}, so by the definition of $\mathcal E$,
\[|\g(x) - \g(x_{1},b_{2})| \geq |y_{n+1}(x_{2}) - y_{n+1}(b_{2})| - 3s_{n+1}/4.\]
Since $|y_{n+1}(x_{2})-y_{n+1}(b_{2})|\geq s_{n+1} \geq r_{n_{\alpha}}$, we further have
\begin{equation}\label{eq7}|\g(x) - \g(x_{1},b_{2})| \geq |y_{n+1}(x_{2})-y_{n+1}(b_{2})|/4 \sim \mu([x_{1}, x_{1} +|x_{2}-b_{2}|)),\end{equation}
using \nref{G0} and \eqref{eq4}.

For the second case, suppose that $n\geq n_{\alpha}$ so that \eqref{calE-choice} becomes
\begin{equation}\label{eq8} |\g(x) - \g(x_{1},b_{2})| \geq \Theta(I_{n})|y_{n+1}(x_{2}) - y_{n+1}(b_{2})| - D''\mu(I_{n+1}).\end{equation}
Observe that since $r_{n} \geq |x_{2}-b_{2}| \geq s_{n + 1}$, the condition \nref{G1} implies
\[\Theta(I_{n}) \sim_{D} \Theta([x_{1}, x_{1}+|x_{2}-b_{2}|)).\]
Therefore, by \eqref{eq4},
\begin{equation}\label{eq9}\Theta(I_{n})|y_{n+1}(x_{2}) - y_{n+1}(b_{2})| \sim_{D} \mu([x_{1}, x_{1} + |x_{2}-b_{2}|)).\end{equation}
Since $I_{n+1}$ is an interval of length $r_{n+1}\leq 3^{-n} s_{n + 1} \leq 3^{-n}|x_{2}-b_{2}|$, the quantity in \eqref{eq9} (including the constant depending on $D$) is much larger than $D''\mu(I_{n+1})$ provided $n$ is sufficiently large.
Since $n\geq n_{\alpha}$, we can ensure this by choosing $n_{\alpha}$ sufficiently large.
Therefore, we combine \eqref{eq8} and \eqref{eq9} to see that
\begin{equation}\label{eq10}|\g(x) - \g(x_{1},b_{2})| \gtrsim_{D} \mu([x_{1},x_{1}+|x_{2}-b_{2}|))\end{equation}
provided $n_{\alpha}$ is sufficiently large (depending only upon $D$).

Finally, we use the main assumption that $|x_{1} - b_{1}| \leq \rho_{D} |x - b|$.
By Lemma \ref{linfinity-bound} and the definition of $\tau$ (which depends only upon $D$) given in \eqref{tau},
\[|\g (x_{1}, b_{2})-\g (b)| \leq_{D,C_{\infty}} \mu([x_{1},x_{1}+|x_{1}-b_{1}|)) \lesssim_{D} \tau^{-\log \rho_{D}} \mu([x_{1}, x_{1}+|x-b|)).\]
Since $\rho_{D}\leq 1/2$, we have $|x-b|\leq 2|x_{2}-b_{2}|$ and so
\[|\g (x_{1}, b_{2})-\g (b)| \lesssim_{D} \tau^{-\log \rho_{D}} \mu([x_{1}, x_{1}+|x_{2}-b_{2}|)).\]
Therefore, provided $\rho_{D}$ is sufficiently small (depending only upon $D,C_{\infty}$), we may combine the previous equation with \eqref{eq7} or \eqref{eq10} depending on the case and use the triangle inequality to obtain
\begin{align*}
  |\g(x)-\g(b)| &\geq |\g(x)-\g(x_{1},b_{2})|-|\g(x_{1},b_{2})-\g(b)|\\
                                    &\gtrsim_{D} \mu([x_{1}, x_{1}+|x_{2}-b_{2}|))-\mu([x_{1}, x_{1}+|x_{2}-b_{2}|))/2\\
  &= \mu([x_{1}, x_{1}+|x_{2}-b_{2}|))/2,
\end{align*}
as required.
\end{proof}

\section{Quasisymmetry and dimension distortion}\label{dimensionDistortion}

In this section, we prove the main result, Theorem \ref{mainTechnical}. Recall the maps $f \colon [0,1] \to [0,1]$ and $\g \colon [0,1] \times E \to \R$ constructed in Section \ref{constructions}. For $d \geq 2$, we define the map $F \colon K :=[0,1] \times E^{d - 1} \to \R^{d}$ by setting
\begin{displaymath} F(x_{1}, \ldots, x_{d}) = (f(x_{1}), \g(x_{1}, x_{2}), \g(x_{1}, x_{3}), \ldots, \g(x_{1}, x_{d})). \end{displaymath}
The following tasks remain:
\begin{itemize}
\item Verify that $F$ is a quasisymmetric embedding of $K$ to $\R^{d}$.
\item Find a subset $K_{\epsilon} \subset K$, which has simultaneously full measure with respect to any measure of the form $\calL \times \nu$, where $\nu$ is Radon and supported on $E^{d - 1}$, and which has the property that $\Hd F(K_{\epsilon}) < \epsilon$.
\end{itemize}
We quickly remind the reader, how the various parameters in the construction depend on each other. The numbers $s,\epsilon > 0$ are "given", and determine how non-doubling the measure $\mu$ needs to be: in other words, $\alpha$ needs to be chosen close to one, which increases the doubling constant $D$. To prove that $F$ is quasisymmetric with this $D$, we need the results from the previous section. In particular, the number $n_{\alpha} \in \N$ has to be chosen large enough, and the number $\rho_{D} > 0$ needs to be chosen small enough.

\subsection{Quasisymmetry}\label{quasisymmetry} We now prove that $F$ is a quasisymmetric embedding on $K$. We treat the ambient dimension "$d$" as an absolute constant: $\lesssim_{d}$ is shortened to $\lesssim$. Assume that $\rho_{D} < 1$ and $n_{\alpha} \in \N$ are chosen so that we can apply Lemma \ref{lemma-f2-growth-rate}. We start by proving that $F$ is a \emph{weak quasisymmetry}:
\begin{equation}\label{wQS} a,b,x \in K \text{ and } |a - x| \leq |b - x| \quad \Longrightarrow \quad |F(a) - F(x)| \lesssim_{\mu} |F(b) - F(x)|. \end{equation}
The notation $\lesssim_{\mu}$ is shorthand for $\lesssim_{D,C_{\infty}}$, where $C_{\infty}$ is the constant from \nref{G2}. There are two essentially different cases: either $|b - x|$ is comparable to $|b_{1} - x_{1}|$, or $|b_{1} - x_{1}|$ is significantly smaller than $|x - b|$.

Fix the three points $x,a,b \in K$, and assume for convenience that $a_{1} \leq x_{1} \leq b_{1}$ (this only influences, should we write $[a_{1},x_{1}]$ or $[x_{1},a_{1}]$). First, suppose that 
\begin{displaymath} |b_1-x_1| \geq \left(\frac{\rho_{D}}{2d}\right) |b-x| \gtrsim_{\mu} |a - x| \geq |a_{1} - x_{1}|. \end{displaymath}
Since $\mu$ is doubling, this implies that
\begin{displaymath} |f(x_{1}) - f(a_{1})| = \mu([a_{1},x_{1}]) \lesssim_{\mu} \mu([x_{1},b_{1}]). \end{displaymath}
Therefore, recalling the definition of $F$, and using Lemma \ref{fcomparable} for all $(x_{1},x_{k}),(a_{1},a_{k}) \in [0,1] \times E$, $2 \leq k \leq d$, yields
\begin{align} |F(a)-F(x)| & \leq |f(x_{1}) - f(a_{1})| + \sum_{k = 2}^{d} |\g(a_{1},a_{k}) - \g(x_{1},x_{k})|\notag\\
&\label{form55} \lesssim_{\mu} \mu([x_{1},b_{1}]) + d \cdot \mu([x_{1},x_{1} + |x - a|])\\
& \lesssim_{\mu} \mu([x_{1},b_{1}]) = |f(b_1)-f(x_1)| \leq |F(b)-F(x)|, \notag \end{align}
as claimed.

Second, suppose that $|b_1 - x_1| < (\rho_{D}/2d) |b-x| < |b - x|/2$ and so there exists $2 \leq k \leq d$ such that 
\begin{displaymath} \rho_{D}|(b_{1},b_k) -(x_{1},x_k)| \geq \left(\frac{\rho_{D}}{2d}\right)|x - b| > |x_{1} - b_{1}|. \end{displaymath}
Lemma \ref{lemma-f2-growth-rate} is, hence, applicable to $x'=(x_1,x_{k}) \in [0,1] \times E$ and $b'=(b_1,b_{k}) \in [0,1] \times E$, and the conclusion is that
\[|\g(x')-\g(b')| \gtrsim_{\mu} \mu([x_{1}, x_{1}+|b' -x'|]) \gtrsim_{\mu} \mu([x_{1}, x_{1}+|x-b|]),\]
where the last inequality follows from doubling. Then, by using Lemma \ref{fcomparable} again as on line \eqref{form55}, we obtain,
\begin{align*}
  |F(x)-F(a)| & \lesssim_{\mu} \mu([x_{1}, x_1+|x-a|]) \leq \mu([x_{1}, x_{1}+|x-b|))\\
  &\lesssim_{\mu} |\g(x') - \g(b')| \leq |F(x)-F(b)|.
\end{align*}
This concludes the proof of $F$ being weakly quasisymmetric.

Finally, to see that $F$ is injective and "properly" quasisymmetric, we fix $a,b,x \in K$ with $x \neq b$. Consider the line segment
\begin{displaymath} L_{x} := [0,1] \times \{(x_{2},\ldots,x_{d})\} \subset K, \end{displaymath}
which contains $x$. Pick a point $b' \in L_{x}$ with the property that $|x - b'|$ is as close to $|x - b|$ as possible. If $|x - b| < 1/2$, then one can choose $|x - b'| = |x - b|$, and in general $|x - b'| \leq |x - b| \leq \sqrt{d}|x - b'|$. Note that $0 < |x - b'| = |x_{1} - b_{1}'|$. Then, by the weak quasisymmetry, established above,
\begin{equation}\label{form56} |F(b) - F(x)| \gtrsim_{\mu} |F(b') - F(x)| \geq |f(b_{1}') - f(x_{1})| > 0, \end{equation}
which proves that $F$ is injective. 

To prove quasisymmetry, pick similarly a point $a' \in L_{x}$ with $|a' - x|$ as close to $|a - x|$ as possible, so that also $|a' - x| \leq |a - x| \leq \sqrt{d}|a' - x|$. We claim that
\begin{equation}\label{form57} |F(a) - F(x)| \sim_{\mu} |F(a') - F(x)| \quad \text{and} \quad |F(b) - F(x)| \sim_{\mu} |F(b') - F(x)|. \end{equation} 
By symmetry, it suffices to consider just $x,b,b'$. If $|x - b| < 1/2$, then $|x - b'| = |x - b|$, and the claim follows from two applications of the weak quasisymmetry implication \eqref{wQS}. Otherwise, if $|x - b| > 1/2$, then $|x_{1} - b_{1}'| \geq \sqrt{d}/100$, and it follows from \eqref{form56}, plus the quasisymmetry of $f$, that 
\begin{displaymath} 1 \gtrsim_{\mu} |F(b) - F(x)| \gtrsim_{\mu} |F(b') - F(x)| \geq |f(b_{1}') - f(x_{1})| \sim_{\mu} 1. \end{displaymath}
This proves \eqref{form57}.

Finally, note that the weak quasisymmetry of $F$ on the line $L_{x}$ implies "proper" quasisymmetry on $L_{x}$, since $L_{x}$ is connected, see Heinonen's book \cite[Theorem 10.19]{H}. Thus
\[\frac{|F(a)-F(x)|}{|F(b)-F(x)|} \sim_{\mu} \frac{|F(a')-F(x)|}{|F(b')-F(x)|} \leq \eta_{x}\left(\frac{|a' - x|}{|b' - x|} \right)\]
for some homeomorphism $\eta_{x} \colon [0,\infty) \to [0,\infty)$. Moreover, the weak quasisymmetry constants of $F$ on a fixed line $L_{x}$ do not depend on the choice of $x$, so also $\eta_{x}$ can be chosen independently of $x$. Since $|a' - x|/|b' - x| \sim |a - x|/|b - x|$, this proves that $F$ is quasisymmetric. 

\subsection{Dimension distortion} We now proceed with the task of showing that $\Hd F(K_{\epsilon}) < \epsilon$ for a certain subset $K_{\epsilon} \subset K$, which has simultaneously full $(\calL \times \nu$)-measure for all Radon measures $\nu$ supported on $E^{d - 1}$. Unsurprisingly, the set $K_{\epsilon}$ has the form $G \times E^{d - 1}$, where $G = G_{\epsilon} \subset [0,1]$ is a set of full Lebesgue measure on $[0,1]$.

Fix $M = M_{d,\epsilon,s} \in \N$ large, and for each $n\in \N$ let 
\begin{displaymath} \calS^{M}_{n} := \{I \in \calS_{n} : |f(I)| \leq r_{n}^{1+M}\}. \end{displaymath}
Write also and $G_n := G_{n}^{M} := \cup \{I : I \in \calS_{n}^{M}\}$.
\begin{lemma}\label{dimension-distortion}
Let
\[G := G_{M} := \liminf_{n \to \infty} G_{n} = \bigcup_{k = 0}^{\infty} \bigcap_{n = k}^{\infty} G_{n} \]
and $B := [0,1) \setminus G$. Then, if the parameter $\alpha$ from the construction of $\mu$ is chosen close enough to one, depending only on $s$ and $M$ (hence $\epsilon$), then the set $B$ satisfies $\calL(B) = 0$. \end{lemma}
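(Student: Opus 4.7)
The plan is to apply the Borel--Cantelli lemma. Set $B_{n} := [0,1) \setminus G_{n}$. Then $B = \limsup_{n} B_{n}$, so it suffices to verify that $\sum_{n} \calL(B_{n}) < \infty$ once $\alpha$ is close enough to $1$.

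First, I would re-express $B_{n}$ in terms of the Riesz product density. Since $|f(I)| = \mu(I) = \Theta(I)\, r_{n}$ for every $I \in \calS_{n}$, membership $x \in B_{n}$ is equivalent to $\Theta(I_{n}(x)) > r_{n}^{M}$, where $I_{n}(x) \in \calS_{n}$ is the unique interval containing $x$. The proof of Lemma \ref{uniform-doubling} supplies the explicit formula
\begin{displaymath} \Theta(I) = (1+2\alpha)^{1_{\calJ}(I)}(1-\alpha)^{N_{n} - 1_{\calJ}(I)}, \qquad N_{n} := |\calJ \cap \{1,\ldots,-\log_{3} r_{n}\}|, \end{displaymath}
and taking logarithms turns the condition $\Theta(I_{n}(x)) > r_{n}^{M}$ into a single one-sided lower bound on the digit-one count $1_{\calJ}(I_{n}(x))$.

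The key probabilistic input is that the ternary digits of $x$ are, with respect to Lebesgue measure, i.i.d.\ uniform on $\{0,1,2\}$; hence $x \mapsto 1_{\calJ}(I_{n}(x))$ is $\mathrm{Binomial}(N_{n},1/3)$-distributed. A short algebraic computation shows that the threshold from the previous step exceeds the mean $N_{n}/3$ by a positive fraction of $N_{n}$ precisely when
\begin{displaymath} (1-s)|m_{\alpha}| > M \log 3, \qquad m_{\alpha} := \tfrac{1}{3}\log(1+2\alpha) + \tfrac{2}{3}\log(1-\alpha), \end{displaymath}
where one uses $|\log r_{n}| = 2^{n+m_{s}} \log 3$ together with $N_{n} \sim (1-s) 2^{n+m_{s}}$. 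The first identity is built into the definition of $r_{n}$; the second I would obtain by summing, as a geometric series, the sizes of the successive "batches" in the decomposition of $\calJ$ in \eqref{defCalJ}, the length of batch $k$ being controlled via \eqref{rnsn}. Since $m_{\alpha} \to -\infty$ as $\alpha \nearrow 1$, the displayed inequality holds for $\alpha$ close enough to $1$, depending only on $s$ and $M$.

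Once $\alpha$ is so chosen, Hoeffding's inequality applied to the $\mathrm{Binomial}(N_{n},1/3)$ deviation yields $\calL(B_{n}) \leq \exp(-c N_{n}) \lesssim \exp(-c' \cdot 2^{n})$ for some $c, c' > 0$, whence $\sum_{n} \calL(B_{n}) < \infty$. Borel--Cantelli then gives $\calL(B) = \calL(\limsup_{n} B_{n}) = 0$, as required. The main obstacle will be a clean calculation of $N_{n}$: one must simultaneously bookkeep the two deletions \eqref{form34} and \eqref{form38} when counting how many indices $j$ survive in each batch, so as to isolate the $(1-s)$ factor. Beyond that, the steps — the explicit formula for $\Theta$, the Hoeffding bound on a binomial tail, and the Borel--Cantelli lemma — are all routine.
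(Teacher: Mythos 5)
Your proposal is correct and follows essentially the same route as the paper: express $\log\Theta_n(I)$ as an affine function of a Binomial$(N_n,1/3)$ variable (the paper works with the complementary count $S_n$ with parameter $2/3$, which is the same thing), verify that the threshold defining the bad set lies a positive fraction of $N_n$ above the mean once $\alpha$ is close to $1$, apply a Chernoff/Hoeffding bound, and conclude by Borel--Cantelli. The only cosmetic differences are that the paper introduces an intermediate deviation set $B_n \supset G_n^c$ and uses the cruder lower bound $|\calJ_n| \geq \tfrac{1-s}{2}\lvert\log_3 r_n\rvert$ in place of your sharper asymptotic $N_n \sim (1-s)2^{n+m_s}$; both suffice.
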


\begin{proof} During the proof of the lemma, the reader should view $\calL$ as a probability measure, and to emphasise this, we write $\tn := \calL$. Let $I = [a,b) \in \calS_{n}$, and note that
\begin{equation}\label{form42} |f(I)| = f(b) - f(a) = \int_{a}^{b} \, d\mu = \mu(I) = \Theta_{n}(I)|I|, \end{equation}
where $\Theta_{n}(I)$ is the (common) value of
\begin{displaymath} \Theta_{n}(x) = \prod_{j \in \calJ_{n}} (1 + \alpha \cdot h(3^{j}x)) \end{displaymath}
on the interval $I$, and
\begin{displaymath} \calJ_{n} := \{j \in \calJ : 3^{-j} > r_{n}\}. \end{displaymath}
For $j \in \calJ_{n}$, let $A_{j} \subset [0,1)$ be the event
\begin{displaymath} A_{j} := \{x \in [0,1) : h(3^{j}x) = -1\} = \{x \in [0,1) : 1 + \alpha \cdot h(3^{j}x) = 1 - \alpha\}, \end{displaymath} 
and write
\begin{displaymath} S_{n}(x) := \sum_{j \in \calJ_{n}} \chi_{A_{j}}. \end{displaymath}
The events $A_{j}$ are clearly independent (whether or not $x \in A_{j}$ depends only on the $j^{th}$ decimal in the ternary expansion of $x = 0.x_{0}x_{1},\ldots$), and have probability $\tn\{A_{j}\} = 2/3$. So $S_{n}$ is a sum of $|\calJ_{n}|$ independent random variables with expectation
\begin{displaymath} \E[S_{n}] = \frac{2|\calJ_{n}|}{3}. \end{displaymath}
Moreover, the value of $S_{n}$ is constant on intervals $I \in \calS_{n}$ (we will denote this constant by by $S_{n}(I)$) and the value of $\Theta_{n}(I)$ is determined by 
\begin{equation}\label{form39} \Theta_{n}(I) = (1 - \alpha)^{S_{n}(I)}(1 + 2\alpha)^{|\calJ_{n}| - S_{n}(I)}. \end{equation}

Write $\sigma := (1 - s)/4$ (recall that $\Hd E = s < 1$), and consider the event
\begin{displaymath} B_{n} := \{S_{n} < (\tfrac{2}{3} - \sigma)|\calJ_{n}|\}, \end{displaymath}
which is a union of certain intervals in $\calS_{n}$, denoted by $\calB_{n}$. By Chernoff's inequality,
\begin{equation}\label{form44} \tn\{B_{n}\} \leq \exp(-2\sigma^{2}|\calJ_{n}|). \end{equation}
If $I \in \calG_{n} := \calS_{n} \setminus \calB_{n}$, then
\begin{displaymath} S_{n}(I) \geq (\tfrac{2}{3} - \sigma)|\calJ_{n}| \quad \text{and} \quad |\calJ_{n}| - S_{n}(I) \leq (\tfrac{1}{3} + \sigma)|\calJ_{n}|, \end{displaymath} 
The first inequality is just the definition of $I \in \calG_{n}$, and if second failed, then
\begin{displaymath} |\calJ_{n}| = S_{n}(I) + |\calJ_{n}| - S_{n}(I) > [(\tfrac{2}{3} - \sigma) + (\tfrac{1}{3} + \sigma)]|\calJ_{n}| = |\calJ_{n}|, \end{displaymath}
which is absurd. So, if $I \in \calG_{n}$, the density formula \eqref{form39} implies that
\begin{equation}\label{form41} \Theta_{n}(I) \leq (1 - \alpha)^{(\tfrac{2}{3} - \sigma)|\calJ_{n}|}(1 + 2\alpha)^{(\tfrac{1}{3} + \sigma)|\calJ_{n}|} = [(1 - \alpha)^{\tfrac{2}{3} - \sigma}(1 + 2\alpha)^{\tfrac{1}{3} + \sigma}]^{|\calJ_{n}|}. \end{equation}
We claim that if $\alpha$ is chosen close enough to one, then the right hand side is bounded by $r_{n}^{M}$ for $I \in \calG_{n}$, and all $n \in \N$ large enough. Recall the restrictions on $\calJ$ given in \eqref{defCalJ}. Then the index family $\calJ_{n}$ contains all the indices $j \in \N$ such that $j > 2^{n_{\alpha} + m_{s}}$ and
\begin{equation}\label{form52} 3^{n - 1}r_{n} < 3^{-j} < s_{n} \sim r_{n - 1}^{s}r_{n}^{s}, \end{equation}
where the right hand side is part of \eqref{rnsn} and follows from the requirement $\Hd E = s$. For large enough $n$, the condition \eqref{form52} is already more restrictive than $j > 2^{n_{\alpha} + m_{s}}$, so the latter condition can be simply disregarded. Now, recalling that $r_{n} = 3^{-2^{m_{s} + n}}$, it is easy to check, using the right hand side of \eqref{form52}, that the following inequalities hold for all sufficiently large $n$:
\begin{displaymath} 3^{n - 1}r_{n} < r_{n}^{1 - \sigma} \quad \text{and} \quad s_{n} > r_{n}^{s + \sigma}. \end{displaymath}
Then, for such $n$, the number of indices in $\calJ_{n}$ is at least
\begin{equation}\label{form40} |\calJ_{n}| \geq \log_{3} \frac{1}{r_{n}^{1 - \sigma}} - \log_{3} \frac{1}{r_{n}^{s + \sigma}} = \log_{3} r_{n}^{s - 1 + 2\sigma} = \log_{3} r_{n}^{(s - 1)/2}. \end{equation}
Now, let $N = N(M,s) \in \N$ be the smallest integer satisfying $N(1 - s)/2 \geq M$, so that 
\begin{equation}\label{form53} 3^{-N|\calJ_{n}|} \leq 3^{-N\log_{3} r_{n}^{(s - 1)/2}} \leq r_{n}^{M} \end{equation}
for all $n$ so large that \eqref{form40} holds. Finally, choose $\alpha = \alpha(s,M) \in (0,1)$ to be a number satisfying
\begin{equation}\label{form45} (1 - \alpha)^{\tfrac{2}{3} - \sigma}(1 + 2\alpha)^{\tfrac{1}{3} + \sigma} = 3^{-N}. \end{equation}
It then follows from \eqref{form42}, \eqref{form41}, and \eqref{form53} that if $I \in \calG_{n} \subset \calS_{n}$, and $n$ is sufficiently large, then
\[ |f(I)| = \Theta_{n}(I)|I| \leq r_{n}^{1 + M}.\]
In particular, $\calG_{n}\subset \calS_{n}^{M}$ and so $G_{n}^{c} \subset B_{n}$. Thus, 
\begin{displaymath} B = [0,1) \setminus G = \limsup_{n \to \infty} G_{n}^{c} \subset \limsup_{n \to \infty} B_{n}, \end{displaymath}
and, it suffices to verify that the Lebesgue measure of the set on the right hand side is zero. But this is a straightforward combination of the Borel-Cantelli lemma, the Chernoff bound \eqref{form44}. The proof of the lemma is complete. \end{proof}

Now, we may prove the main result:
\begin{proof}[Proof of Theorem \ref{mainTechnical}] Recall the notation $K := [0,1] \times E^{d - 1}$, and let $G = G_{M} \subset [0,1]$ be the set constructed in the previous lemma (for $M$ to be chosen very soon), and assume that $\alpha$ is so close to one that $\calL([0,1] \setminus G) = 0$. Then $G \times E^{d - 1}$ has full measure with respect to $\calL \times \nu$, for any Radon measure $\nu$ on $E^{d - 1}$. It remains to show that if $M = M_{d,\epsilon,s} \in \N$ is chosen large enough, then $\Hd F(G \times E^{d - 1}) \leq \epsilon$. Recalling the definition of $G$, the estimate $\Hd F(G \times E^{d - 1}) \leq \epsilon$ follows, if we manage to show that
\begin{equation}\label{form59} \dim F\left(\bigcap_{n=k}^{\infty}G_{n} \times E^{d-1}\right) \leq \epsilon, \qquad k \in \N. \end{equation}
To this end, fix $\delta > 0$, $n \geq k$, and $Q = I \times J_{2} \times \cdots \times J_{d}$ with $I \in \calS_{n}^{M} \subset \calS_{n}$ and $J_{j}\in \calI_{n}$, for $2 \leq j \leq d$. Fix any $x,y \in K \cap Q$ so that $x_{1}$ is the left endpoint of $I$ and $y_{1}$ is the right endpoint of $I$. Then, fix an arbitrary point $z \in K \cap Q$; it follows that $x_{1} \leq z_{1} \leq y_{1}$, and either $z_{1} - x_{1} \sim \diam(Q) \geq |z - x|$ or $y_{1} - z_{1} \sim \diam(Q) \geq |z - y|$. Assume, for instance, that the former holds. Then, by Lemma \ref{fcomparable}, and the doubling of $\mu$, we infer that
\begin{align*} |F(z) - F(x)| &\lesssim_{\mu} \mu([x_{1}, x_{1}+|z - x|)) \lesssim_{\mu} \mu([x_{1}, z_{1})) \leq \mu(I) \leq r_{n}^{1 + M}, \end{align*}
using also the definition of "$I \in \calS_{n}^{M}$" in the last inequality. The same estimate also holds for $x,y$ in place of $x,z$, and consequently $\diam F(Q) \leq C_{\mu}r_{n}^{1 + M}$ for some constant $C_{\mu} \geq 1$ depending on $\mu$. Finally, by choosing $n = n_{\delta} \geq k$ such that $C_{\mu}r_{n}^{1 + M} < \delta$, we have
\begin{align*} \calH^{\epsilon}_{\delta} \left[ F\left(\bigcap_{n = k}^{\infty} G_{n}\times E^{d-1} \right) \right] &\leq \calH^{\epsilon}_{\delta}(F(G_{n} \times E^{d-1})) \leq |\calS_{n}||\calI_{n}|^{d - 1} \cdot (C_{\mu}r_{n}^{1 + M})^{\epsilon}\\
  &\lesssim_{\mu} r_{n}^{-(1+(d-1)s) + (1 + M)\epsilon}.
\end{align*}
Picking $M = M_{d,\epsilon,s} \in \N$ so large that $(1 + M)\epsilon - (1+(d-1)s) > 0$, the right hand side tends to zero as $n \to \infty$. This proves \eqref{form59}, and the theorem. \end{proof}

\appendix

\section{A lemma on Hausdorff measures and products}\label{A}

The next lemma implies, in particular, that if $0 \leq s \leq d - 1$, and $E \subset \R^{d - 1}$ is a Borel set with $0 < \calH^{s}(E) < \infty$, then the product measure $\calL^{1}|_{[0,1]} \times \calH^{s}|_{E}$ is equivalent to the restriction of $(1 + s)$-dimensional Hausdorff measure on $[0,1] \times E$. This was required to deduce Theorem \ref{mainHigherDimension} from Theorem \ref{mainTechnical}.

\begin{lemma} Assume that $A_{1} \subset \R^{d_{1}}$ and $A_{2} \subset \R^{d_{2}}$ are Borel sets, and $s_{1} \in (0,d_{1}]$, $s_{2} \in (0,d_{2}]$ are numbers such that $0 < \calH^{s_{1}}(A_{1}) < \infty$ and $0 < \calH^{s_{2}}(A_{2}) < \infty$. Assume, moreover, that $A_{1}$ is $s_{1}$-Ahlfors-David regular. Then the measures $\calH^{s_{1}}|_{A_{1}} \times \calH^{s_{2}}|_{A_{2}}$ and $\calH^{s_{1} + s_{2}}|_{A_{1} \times A_{2}}$ are mutually absolutely continuous. \end{lemma}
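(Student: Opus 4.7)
Let $\mu := \calH^{s_1}|_{A_1} \times \calH^{s_2}|_{A_2}$, a Radon measure on $\R^{d_1 + d_2}$ supported in $A_1 \times A_2$. The plan is to bound the upper $(s_1 + s_2)$-density of $\mu$ both above and below by positive constants at $\mu$-almost every point, and then deduce mutual absolute continuity with $\calH^{s_1+s_2}|_{A_1 \times A_2}$ from standard density comparison lemmas in Mattila \cite{Ma}, Chapter 6.

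\textbf{Step 1: density bounds for $\mu$.} For any $(x,y) \in A_1 \times A_2$ and $r > 0$, the inclusions $B(x, r/\sqrt{2}) \times B(y, r/\sqrt{2}) \subset B((x,y), r) \subset B(x,r) \times B(y,r)$ together with Fubini give
\[ \calH^{s_1}(A_1 \cap B(x, \tfrac{r}{\sqrt 2})) \cdot \calH^{s_2}(A_2 \cap B(y, \tfrac{r}{\sqrt 2})) \leq \mu(B((x,y), r)) \leq \calH^{s_1}(A_1 \cap B(x,r)) \cdot \calH^{s_2}(A_2 \cap B(y,r)). \]
The $s_1$-Ahlfors-David regularity of $A_1$ makes the $A_1$-factor comparable to $r^{s_1}$ for every $x \in A_1$ and every $0 < r \leq \diam A_1$. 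The classical density theorem for finite Hausdorff measure (\cite{Ma}, Theorem 6.2) gives $2^{-s_2} \leq \limsup_{r \to 0} \calH^{s_2}(A_2 \cap B(y,r))/(2r)^{s_2} \leq 1$ for $\calH^{s_2}$-a.e. $y \in A_2$. Combining the two estimates, there exist positive constants $c_1, c_2$ (depending only on $s_1, s_2$ and the regularity constant of $A_1$) such that
\[ c_1 \leq \limsup_{r \to 0} \frac{\mu(B((x,y), r))}{(2r)^{s_1 + s_2}} \leq c_2 \]
at every $x \in A_1$ and $\calH^{s_2}$-a.e. $y \in A_2$; by Fubini, this holds $\mu$-a.e. on $A_1 \times A_2$. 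Call this good set $G$.

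\textbf{Step 2: density comparison.} Applying the one-sided density theorems from Mattila \cite{Ma}, Chapter 6 (each one-sided density bound yields a one-sided measure comparison between $\mu$ and $\calH^{s_1+s_2}$ on the set where it holds), the two-sided bounds on $G$ give constants $C_1, C_2 > 0$ with $C_1 \calH^{s_1+s_2}(E) \leq \mu(E) \leq C_2 \calH^{s_1+s_2}(E)$ for every Borel $E \subset G$. In particular $\mu|_G$ and $\calH^{s_1+s_2}|_G$ are mutually absolutely continuous.

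\textbf{Step 3: the exceptional set.} Let $N := (A_1 \times A_2) \setminus G$. By construction $\mu(N) = 0$, and $N \subset A_1 \times N_2$ for some $\calH^{s_2}$-null set $N_2 \subset A_2$. A routine covering argument exploiting the AD regularity of $A_1$ yields $\calH^{s_1+s_2}(A_1 \times N_2) \lesssim \calH^{s_1}(A_1) \cdot \calH^{s_2}(N_2) = 0$: given a fine cover $\{U_j\}$ of $N_2$ with $\sum (\diam U_j)^{s_2}$ close to $\calH^{s_2}(N_2)$, the AD regularity allows each $A_1 \times U_j$ to be covered by $\lesssim \calH^{s_1}(A_1)/(\diam U_j)^{s_1}$ sets of diameter $\sim \diam U_j$, and summing $(\text{diameter})^{s_1+s_2}$ gives the claim. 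Combining Steps 2 and 3 yields the required mutual absolute continuity.

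\textbf{Main obstacle.} The delicate point is Step 1: one must transfer density estimates from $\calH^{s_2}|_{A_2}$ to the product measure despite the mismatch between Euclidean balls and product boxes, and despite the fact that a $\limsup$ of a product is not the product of $\limsup$s. The AD regularity of $A_1$ is decisive here, as it lets the $A_1$-factor be replaced cleanly by $r^{s_1}$ at every scale and point, leaving only the $A_2$-factor to fluctuate, whose fluctuation is exactly controlled by the classical density theorem. Without AD regularity of $A_1$ the argument in Step 3 would break down and $\calH^{s_1+s_2}$ could charge $A_1 \times N_2$ even when $\calH^{s_2}(N_2) = 0$, so one direction of absolute continuity would be lost.
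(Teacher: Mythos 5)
Your proof is correct, and it takes a genuinely different route from the paper. The paper handles the harder direction ($\calH^{s_{1}+s_{2}}|_{A_{1}\times A_{2}} \ll \calH^{s_{1}}|_{A_{1}}\times\calH^{s_{2}}|_{A_{2}}$) by fixing a product cube $Q = Q_{1}\times Q_{2}$ centred at a point of $A_{1}\times A_{2}$, running the covering argument once to show the uniform ratio bound $\calH^{s_{1}+s_{2}}(Q \cap (A_{1}\times A_{2})) \lesssim_{A_{1}} [\calH^{s_{1}}|_{A_{1}}\times\calH^{s_{2}}|_{A_{2}}](Q)$, and then invoking a Radon--Nikodym-type theorem (Mattila Theorem 2.12(3)); the easy direction is outsourced to Falconer's Corollary 5.9. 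You instead compute two-sided $(s_{1}+s_{2})$-density bounds for the product measure $\mu$ at $\mu$-a.e.\ point (using AD regularity to make the $A_{1}$-factor deterministic, then the classical density theorem for $\calH^{s_{2}}|_{A_{2}}$), pass to a measure comparison on the good set via Mattila's Chapter~6 density theorems, and handle the exceptional set $A_{1}\times N_{2}$ by an ad hoc covering. The covering-of-$A_{1}$ device (packing $A_{1}$ by $\lesssim \calH^{s_{1}}(A_{1})/r^{s_{1}}$ balls of radius $r$) is common to both; the difference is that the paper applies it at all scales to get the cube ratio bound directly, while you apply it only once to dispose of the $\calH^{s_{2}}$-null set $N_{2}$. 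Your approach buys a self-contained proof of \emph{both} directions of absolute continuity (the easy direction comes out for free from the upper density bound) and a quantitative two-sided comparison $C_{1}\calH^{s_{1}+s_{2}}|_{G} \leq \mu|_{G} \leq C_{2}\calH^{s_{1}+s_{2}}|_{G}$; the paper's approach is shorter and avoids the density theorem for $\calH^{s_{2}}|_{A_{2}}$ and the good/bad set split. One small point worth spelling out in a final write-up: the exceptional set where the density estimate fails is of the product form $A_{1}\times N_{2}$ with $N_{2}\subset A_{2}$ a \emph{fixed} $\calH^{s_{2}}$-null set (independent of $x\in A_{1}$), because AD regularity gives the $A_{1}$-factor estimate at every $x\in A_{1}$ without exception; this is exactly what makes Step~3 run, and you implicitly rely on it when you write $N\subset A_{1}\times N_{2}$.
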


\begin{proof} The absolute continuity $\calH^{s_{1}}|_{A_{1}} \times \calH^{s_{2}}|_{A_{2}} \ll \calH^{s_{1} + s_{2}}|_{A_{1} \times A_{2}}$ does not require regularity from $A_{1}$ and follows, for instance, from Corollary 5.9 in Falconer's book \cite{Fa}. 

To prove the converse direction of absolute continuity, fix a cube $Q = Q_{1} \times Q_{2} \subset \R^{d_{1} + d_{2}}$, where $Q_{1} \subset \R^{d_{1}}$ and $Q_{2} \subset \R^{d_{2}}$ are cubes of the same side-length, centred at $A_{1}$ and $A_{2}$, respectively, with $\diam(Q_{j}) \leq \diam(A_{j})$. Then, we claim that
\begin{equation}\label{form60} \frac{\calH^{s_{1} + s_{2}}|_{A_{1} \times A_{2}}(Q)}{[\calH^{s_{1}}|_{A_{1}} \times \calH^{s_{2}}|_{A_{2}}](Q)} \lesssim_{A_{1}} 1, \end{equation}
where the implicit constants only depend on the ambient dimensions $d_{1},d_{2}$ (which we treat as absolute constants in the notation), and the regularity constants of $A_{1}$. This will imply, according to Theorem 2.12(3) in \cite{Ma}, that $\calH^{s_{1} + s_{2}}|_{A_{1} \times A_{2}} \ll  \calH^{s_{1}}|_{A_{1}} \times \calH^{s_{2}}|_{A_{2}}$. 

Fix $\delta > 0$ and cover $A_{2} \cap Q_{2}$ by balls $\{B_{j}\}_{j \in J}$ so that 
\begin{displaymath} r_{j} := \diam(B_{j}) \leq \delta \leq \diam(A_{1} \cap Q_{1}) \end{displaymath}
(note that $\diam(A_{1} \cap Q_{1}) > 0$, because $Q_{1}$ is centred at $A_{1}$, and $A_{1}$ is $s_{1}$-Ahlfors-David regular with $s_{1} > 0$), and
\begin{displaymath} \sum_{j \in J} r_{j}^{s_{2}} \lesssim \calH^{s_{2}}(A_{2} \cap Q_{2}). \end{displaymath}
Then, for every index $j \in J$, cover $A_{1} \cap Q_{1}$ by $\lesssim \calH^{s_{2}}(A_{1} \cap Q_{1})r_{j}^{-s_{1}}$ balls of diameter $r_{j}$; this is possible by the Ahlfors-David regularity of $A_{1}$, and we denote these balls by $\{B^{j}_{i}\}_{i \in I_{j}}$. Now, the sets $B_{j} \times B^{j}_{i}$, with $j \in J$ and $i \in I_{j}$, cover $[A_{1} \times A_{2}] \cap Q$, and
\begin{align*} \sum_{j \in J} \sum_{i \in I_{j}} \diam(B_{j} \times B^{j}_{i})^{s_{1} + s_{2}} & \sim \sum_{j \in J} r_{j}^{s_{1} + s_{2}} \card (I_{j})\\
& \lesssim \calH^{s_{1}}(A_{1} \cap Q_{1}) \sum_{j \in J} r_{j}^{s_{2}}\\
& \lesssim \calH^{s_{1}}(A_{1} \cap Q_{1})\calH^{s_{2}}(A_{2} \cap Q_{2})\\
& = [\calH^{s_{1}}|_{A_{1}} \times \calH^{s_{2}}|_{A_{2}}](Q). \end{align*} 
This proves \eqref{form60}, and the lemma. \end{proof}

\end{document}